\newtheorem{thm}{Theorem}[section]
\newtheorem{lemma}[thm]{Lemma}
\theoremstyle{definition}
\def\Z{\mathbb Z}
\def\pmod #1{\ ({\rm mod}\ #1)}
\def\le{\leqslant}
\def\ge{\geqslant}
\numberwithin{equation}{section}
\begin{document}


\baselineskip=17pt



\title[The sum of divisors of a quadratic form]
{The sum of divisors of a quadratic form}

\author[L. Zhao]{Lilu  Zhao}
\address{School of Mathematics, Hefei University of Technology, Heifei 230009, People's Republic of China}
\email{zhaolilu@gmail.com}

\date{}

\begin{abstract}
We study the sum of divisors of the quadratic form
$m_1^2+m_2^2+m_3^2$. Let $$S_3(X)=\sum_{1\le m_1,m_2,m_3\le
X}\tau(m_1^2+m_2^2+m_3^2).$$ We obtain the asymptotic formula
$$S_3(X)=C_1X^3\log X+ C_2X^3+O(X^2\log^7 X),$$
where $C_1,C_2$ are two constants. This improves upon the error
term $O_\varepsilon(X^{8/3+\varepsilon})$ obtained by Guo and Zhai
\cite{GZ}.
\end{abstract}

\subjclass[2010]{Primary 11P05; Secondary 11P32, 11P55}

\keywords{divisor function, quadratic form,
Hardy-Littlewood-Kloosterman method}

\maketitle
\section{Introduction}

Let $\tau(n)$ denote the number of divisors of $n$. Gafurov
\cite{Gafurov1,Gafurov2} investigated the sum
\begin{align*}S_2(X)=\sum_{1\le m,n\le
X}\tau(m^2+n^2),
\end{align*}
and obtained the asymptotic formula
\begin{align*}S_2(X)=A_1X^2\log X+ A_2X^2+O(X^{5/3}\log^9 X),\end{align*}
where $A_1$ and $A_2$ are certain constants. This was improved by
Yu \cite{Yu}, who proved that
\begin{align*}S_2(X)=A_1X^2\log X+ A_2X^2+O_\varepsilon(X^{3/2+\varepsilon}),\end{align*}
for any fixed positive real number $\varepsilon$.

C. Calder\'{o}n and M. J. de Velasco \cite{CV} studied the
divisors of the quadratic form $m_1^2+m_2^2+m_3^2$. Let
\begin{align}\label{e11}S_3(X)=\sum_{1\le m_1,m_2,m_3\le
X}\tau(m_1^2+m_2^2+m_3^2).\end{align} C. Calder\'{o}n and M. J.
Velasco established the asymptotic formula
\begin{align}\label{e12}S_3(X)=\frac{8\zeta(3)}{5\zeta(5)}X^3\log X+ O(X^3).\end{align}
Let
\begin{align}\label{e13}\mathfrak{S}_1=\sum_{q=1}^\infty\frac{1}{q^4}\sum_{\substack{a=1\\ (a,q)=1}}^q
\Big(\sum_{x=1}^qe\big(\frac{ax^2}{q}\big)\Big)^3\end{align}and
\begin{align}\label{e14}\mathfrak{S}_2=\sum_{q=1}^\infty\frac{(2\gamma-2\log q)}{q^4}\sum_{\substack{a=1\\ (a,q)=1}}^q
\Big(\sum_{x=1}^qe\big(\frac{ax^2}{q}\big)\Big)^3.\end{align}
Define
\begin{align*}\mathcal{I}_1=\int_{-\infty}^{\infty}\Big(\int_0^1e(x^2\beta)dx\Big)^3
\Big(\int_0^3e(-x\beta)dx\Big)d\beta,\end{align*}and
\begin{align*}\mathcal{I}_2=\int_{-\infty}^{\infty}\Big(\int_0^1e(x^2\beta)dx\Big)^3
\Big(\int_0^3(\log x)e(-x\beta)dx\Big)d\beta.\end{align*}
Recently, Guo and Zhai \cite{GZ} improved (\ref{e12})
to\begin{align*}S_3(X)=\frac{8\zeta(3)}{5\zeta(5)}X^3\log
X+(\mathfrak{S}_1\, \mathcal{I}_2+\mathfrak{S}_2\,
\mathcal{I}_1)X^3+ O_\varepsilon(X^{8/3+\varepsilon}),\end{align*}
where $\varepsilon$ is an arbitrary positive number.

The purpose of this paper is to prove the following result.
\begin{thm}\label{theorem}
Let $S_3(X)$ be defined in (\ref{e11}). We
have\begin{align}\label{e15}S_3(X)=\frac{8\zeta(3)}{5\zeta(5)}X^3\log
X+(\mathfrak{S}_1 \mathcal{I}_2+\mathfrak{S}_2 \mathcal{I}_1)X^3+
O(X^{2}\log^7X).\end{align}
\end{thm}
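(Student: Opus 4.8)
The plan is to combine Dirichlet's hyperbola identity with the circle method, so that the arithmetic of the form $m_1^2+m_2^2+m_3^2$ is isolated in a singular series while its geometry is captured by a singular integral. Writing $n=m_1^2+m_2^2+m_3^2$, I would start from
$$\tau(n)=2\sum_{\substack{d\mid n\\ d\le\sqrt n}}1-\mathbf 1_{n=\square},$$
which gives
$$S_3(X)=2\sum_{d\le\sqrt3\,X}A(d,X)-R(X),\qquad A(d,X)=\#\{m\in[1,X]^3:\ d\mid n,\ n\ge d^2\},$$
where $R(X)=\#\{m\in[1,X]^3:\ n\ \text{is a perfect square}\}$. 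The quantity $R(X)$ counts lattice points on the cone $m_1^2+m_2^2+m_3^2=k^2$ inside a box, and the classical estimate for $\sum_k r_3(k^2)$ bounds it by $O(X^2\log X)$, which is admissible. Everything then reduces to evaluating $\sum_{d\le\sqrt3X}A(d,X)$ with an error no larger than $O(X^2\log^7X)$.

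For each $d$ I would detect $d\mid n$ by additive characters and reduce every fraction $a/d$ to lowest terms $b/q$ with $q\mid d$, turning the inner sum into a product of the quadratic exponential sums $\sum_{m\le X}e(bm^2/q)$ and, after completion, into Gauss sums $S(b,q)=\sum_{x\bmod q}e(bx^2/q)$. The expected principal term is
$$A(d,X)\ \approx\ \frac{\rho(d)}{d^3}\,W(d,X),\qquad \rho(d)=\#\{x\bmod d:\ x_1^2+x_2^2+x_3^2\equiv0\pmod d\},$$
where $W(d,X)=\operatorname{vol}\{t\in[0,X]^3:\ |t|^2\ge d^2\}$ records the range $n\ge d^2$. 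The local factor satisfies $\rho(d)/d^3=d^{-1}\sum_{q\mid d}g(q)$ with $g(q)=q^{-3}\sum_{(b,q)=1}S(b,q)^3$, and partial summation yields $\sum_{d\le Y}\rho(d)/d^3=\mathfrak S_1\log Y+c_0+O(Y^{-1/2+\varepsilon})$ because $\sum_q g(q)/q=\mathfrak S_1$.

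Feeding this into $\sum_d \frac{\rho(d)}{d^3}W(d,X)$ and writing $W(d,X)=X^3w(d/X)$ with $w$ the profile of the body $\{|t|^2\ge u^2\}$, the logarithmic divergence of $\int_{1/X}w(u)\,du/u$ near the origin produces the factor $\log X$; the leading coefficient comes out as $2\mathfrak S_1$, which equals $\tfrac{8\zeta(3)}{5\zeta(5)}$ after an Euler product computation. The constant term is then assembled from the geometry of $w$ together with the $2\gamma-2\log q$ correction — arising from the secondary term in $\sum_{m\le X}e(bm^2/q)$ and from the $\sum_{e\le Y/q}1/e$ in the divisor count — which reorganizes into $\mathfrak S_1\mathcal I_2+\mathfrak S_2\mathcal I_1$ with $\mathcal I_1,\mathcal I_2$ the stated singular integrals. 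These steps are routine but lengthy, and I would defer their bookkeeping.

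The main obstacle is the range $d$ comparable to $X$, equivalently the minor arcs and large moduli $q$, where the approximation $\sum_{m\le X}e(bm^2/q)\approx q^{-1}S(b,q)X$ fails and genuine cancellation must be extracted. This is exactly where the Hardy--Littlewood--Kloosterman method enters: after Poisson summation in the $m_i$ one is left with incomplete Gauss sums whose error terms, summed over the dual variables, are governed by Kloosterman sums $\sum^{*}_{x\bmod q}e\big((hx+\bar x k)/q\big)$, and Weil's bound — combined with an average over $q$ (respectively over $d$) — is what pushes the minor-arc contribution below $X^2\log^7X$, whereas the trivial bound only recovers the $X^{8/3+\varepsilon}$ of Guo and Zhai. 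The delicate points are to keep all estimates uniform across the transition region $X\le d\le\sqrt3\,X$, where both the ball--box geometry in $W(d,X)$ and the arithmetic of $S(b,q)$ vary, and to arrange the Kloosterman-sum bookkeeping so that the accumulated divisor-function factors leave only the clean power of logarithm; this, rather than the extraction of the main term, is where the improvement over \cite{GZ} is won.
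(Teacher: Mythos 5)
Your route is genuinely different from the paper's, and it is worth saying how: you split $\tau$ by the hyperbola identity and reduce everything to the congruence counts $A(d,X)$, which is the classical Gafurov--Yu--Guo--Zhai line, whereas the paper never decomposes $\tau$ at all. It attaches a carefully built smooth weight $w$ to $\tau(n)$, expands $h(\alpha)=\sum_n w(n)\tau(n)e(n\alpha)$ near each rational by the Voronoi summation formula (Bessel kernels $Y_0,K_0$, Lemma \ref{lemma42}), and runs the Hardy--Littlewood--Kloosterman dissection on $\int_0^1 f(\alpha)^3h(-\alpha)\,d\alpha$, the cancellation coming from the complete twisted sums $T(q;n_1,n_2,n_3,m)$ of Lemmas \ref{lemma35}--\ref{lemma38}. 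Your main-term skeleton is sound: the identity $\tau(n)=2\sum_{d\mid n,\ d\le\sqrt n}1-\mathbf{1}_{n=\square}$, the bound for $R(X)$, the local densities $\rho(d)/d^3=d^{-1}\sum_{q\mid d}g(q)$, and the expansion $\sum_{d\le Y}\rho(d)/d^3=\mathfrak{S}_1\log Y+c_0+O(Y^{-1/2+\varepsilon})$ are all correct.

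The gap is that the whole content of the theorem --- the error term $O(X^2\log^7X)$ --- is carried by a single asserted sentence, and the mechanism you name is not the one that is actually needed. First, the sums that arise after Poisson summation are not Kloosterman sums: reducing the shifted Gauss sums $S(q,b,h)$ via Lemma \ref{lemma32}, the averages over $(b,q)=1$ collapse to Gauss/Sali\'e-type sums $\sum_{(b,q)=1}\big(\tfrac{b}{q}\big)e(k\overline{b}/q)$, which are evaluated exactly (Lemma \ref{lemma34}); invoking Weil's bound adds nothing beyond the square-root cancellation the classical treatments already exploit, so it cannot by itself explain passing from $X^{8/3+\varepsilon}$ to $X^2\log^7X$. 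Second --- and this is the real obstruction --- your decomposition forces sharp cutoffs: the box $[1,X]^3$ and, worse, the coupling $n\ge d^2$, which destroys the product structure $f(b/q)^3$ and whose sharp boundary makes the Poisson dual sums fail to converge absolutely in three variables (the Fourier transform of the box--ball cap decays like $|\xi|^{-2}$, and $\sum_{h\in\Z^3\setminus\{0\}}|h|^{-2}$ diverges). You must therefore smooth; but the constraint $d\le\sqrt n$ can only be smoothed at relative scale $\delta\ll X^{-1}\log^{6}X$ before the misclassified pairs (divisors of $n$ lying within a window of relative width $\delta$ around $\sqrt n$, on average over values of the form --- itself a nontrivial divisors-in-short-intervals problem) cost more than the target error, and you then need all the resulting exponential-sum and oscillatory-integral estimates to be uniform in the degenerate range $X\le d\le\sqrt3\,X$, which you flag as ``delicate'' but for which you give no mechanism. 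This is precisely the difficulty the paper's setup is engineered to avoid: the smooth weight $w$ (with $w''\ll x^{-2}$ on the edge pieces) is exactly what makes the Voronoi error sums $\sum_{|n|\neq0}|\Delta(n,q,\beta)|$ of Lemma \ref{lemma42} absolutely convergent, and keeping $\tau$ whole means no constraint of the shape $n\ge d^2$ ever enters. Until you specify the smoothing, prove the short-interval divisor estimate it requires, and carry out the summation over $d\le\sqrt3\,X$ showing the total is $O(X^2\log^7X)$, the proposal is a plausible program rather than a proof: the one computation it defers is the theorem.
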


It is worth to pointing out that the error term $O(X^2\log^7 X)$
in (\ref{e15}) cannot be replaced by $o(X^2\log X)$. Otherwise, on
considering $X=N+1/3$ and $X=N+2/3$ with $N$ a positive integer,
we have for $j=1,2$ that
\begin{align*}S_3(N+j/3)=C_1(N+j/3)^3\log(N+j/3)+
C_2(N+j/3)^3+ o(N^{2}\log N).\end{align*} This leads to a
contradiction since $S_3(N+\frac13)=S_3(N+\frac23)$. Therefore,
the room for the further improvement is no more than $O(\log^6X)$.

 \vskip3mm

\section{Preliminaries}
Throughout this paper, we assume that $X$ is a sufficiently large
real number. Let $P=\lfloor 5X\rfloor$, where $\lfloor x\rfloor$
denotes the greatest integer not exceeding real number $x$.

Our initial approach is to install the smooth weight in our
problem. Let $w_0(x)$ be a function on $[0,\infty)$ satisfying
$w_0(x)=1$ for $x=[0,3]$, $w_0(x)=0$ for $x\ge 4$ and $w_0''(x)$
exists for $x>0$. Let $w_1(x)$ be a function supported on
$[1/2,3]$ satisfying $w_1(x)=1$ for $1<x<2$ and $w_1''(x)$ exists
for $0<x<2$. Now we introduce the function $w(x)$ defined by
\begin{align*}w(x)=\begin{cases}w_1(x), &\ \ \textrm{ if } \ x<1
\\ 1, &\ \ \textrm{ if } \ 1\le x\le 3X^2,
\\ w_0(x/X^2), &\ \ \textrm{ if } \ x>3X^2.\end{cases}\end{align*}
Note that $w(x)$ is a smooth function supported on $[1/2,4X^2]$.
By (\ref{e11}),
\begin{align*}S_3(X)=\sum_{1\le m_1,m_2,m_3\le X}\tau(m_1^2+m_2^2+m_3^2)=\sum_{\substack{1\le m_1,m_2,m_3\le X
\\ 1\le n\le 3X^2 \\ m_1^2+m_2^2+m_3^2=n}}\tau(n).\end{align*}
Since $m_1,m_2,m_3\le X$ and $m_1^2+m_2^2+m_3^2=n$ together imply
$n\le 3X^2$, we conclude that
\begin{align*}S_3(X)=\sum_{\substack{1\le m_1,m_2,m_3\le X
\\ n \\ m_1^2+m_2^2+m_3^2=n}}\tau(n)w(n).\end{align*}
In order to apply the circle method, we introduce the exponential
sums\begin{align*}f(\alpha)=\sum_{1\le m\le X}e(m^2\alpha),\ \ \
h(\alpha)=\sum_{n}w(n)\tau(n)e(n\alpha),
\end{align*}where we write $e(z)$ for $e^{2\pi iz}$. By
orthogonality,
\begin{align}\label{e21}S_3(X)=\int_0^1f(\alpha)^3h(-\alpha)d\alpha.\end{align}

Let $g$ be a smooth, compactly supported function on $(0,\infty)$.
The Voronoi type summation formula asserts that
\begin{align}\sum_{n=1}^\infty\tau(n)e(\frac{an}{q})g(n)=&q^{-1}\int(\log x+2\gamma-2\log
q)g(x)dx\notag
\\ &+\sum_{n=1}^\infty\tau(n)e(-\frac{\overline{a}\,n}{q})\widetilde{g}_{Y}(n)
+\sum_{n=1}^\infty\tau(n)e(\frac{\overline{a}\,n}{q})\widetilde{g}_{K}(n)\label{e22}\end{align}
where $(a,q)=1$, $\overline{a}$ denotes the inverse of $a$ modulo
$q$, and
\begin{align*}\widetilde{g}_{Y}(y)=-\frac{2\pi}{q}\int_0^\infty g(x)Y_0\big(\frac{4\pi\sqrt{xy}}{q}\big)dx,
\ \ \widetilde{g}_{K}(y)=\frac{4}{q}\int_0^\infty
g(x)K_0\big(\frac{4\pi\sqrt{xy}}{q}\big)dx.\end{align*} Here $Y_0$
and $K_0$ denote the standard Bessel functions. For the proof of
the above well-known formula, one may refer to Section 4.5 of
Iwaniec and Kowalski \cite{IK}. For $0<x\ll 1$, one has
\begin{align}\label{e23}Y_0(x)\ =\ \frac{2}{\pi}\log\frac{x}{2}+O(1),\ \ \ \ K_0(x)=\log\frac{2}{x}+O(1)\end{align}
and
\begin{align}\label{e24}Y_\nu(x),\ K_\nu(x)\ll_{\nu}\,x^{-\nu}\ \ \textrm{ for real }\ \nu>0.\end{align}
If $\nu\ge 0$ and $x>1+\nu^2$, then
\begin{align}\label{e25}\begin{split}Y_\nu(x)\ =&\ \sqrt{\frac{2}{\pi x}}\Big(\sin(x-\nu
\pi/2-\pi/4)+O(\frac{1+\nu^2}{x})\Big),
\\  K_\nu(x)\ =&\ \sqrt{\frac{2}{\pi x}}e^{-x}
\Big(1+O(\frac{1+\nu^2}{x})\Big).\end{split}\end{align} Their
derivatives fulfil recurrence relations: for any $\nu\ge 0$,
\begin{align}\label{e26} \frac{d}{dx}\Big(x^{\nu+1}B_{\nu+1}(x)\Big)=\epsilon x^{\nu+1}B_\nu(x)\end{align}
where $\epsilon=1$ for $B=Y$ and $\epsilon=-1$ for $B=K$.

\vskip3mm

\section{The properties of the Gauss sum}
We define
\begin{align*}S(q,a,n)=\sum_{x=1}^qe(\frac{ax^2+nx}{q})\ \ \textrm{ and }\ \
S(q,a)=S(q,a,0).\end{align*}

\begin{lemma}\label{lemma31}Suppose that $(q_1,q_2)=1$. Then we have
\begin{align*}S(q_1q_2,a_1q_2+a_2q_1,n)=S(q_1,a_1q_2^2,n)S(q_2,a_2q_1^2,n).\end{align*}
\end{lemma}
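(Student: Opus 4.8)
The plan is to prove this multiplicativity relation by a direct application of the Chinese Remainder Theorem, exactly as in the classical treatment of Gauss sums. Since $(q_1,q_2)=1$, the map $(x_1,x_2)\mapsto q_2x_1+q_1x_2$ induces a bijection from $(\Z/q_1\Z)\times(\Z/q_2\Z)$ onto $\Z/q_1q_2\Z$; concretely, as $x_1$ runs through a complete residue system modulo $q_1$ and $x_2$ through one modulo $q_2$, the integer $x=q_2x_1+q_1x_2$ runs through a complete residue system modulo $q_1q_2$. First I would substitute this parametrization of $x$ into the definition of $S(q_1q_2,a_1q_2+a_2q_1,n)$, so that the single sum over $x$ becomes a double sum over $x_1$ and $x_2$.

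The next step is to expand the argument of the exponential and reduce it modulo $q_1q_2$. Writing $a=a_1q_2+a_2q_1$ and $x=q_2x_1+q_1x_2$, one expands $x^2=q_2^2x_1^2+2q_1q_2x_1x_2+q_1^2x_2^2$ and multiplies out $ax^2$. The key observation is that every mixed term carries a factor $q_1q_2$ and therefore vanishes modulo $q_1q_2$: concretely $ax^2\equiv a_1q_2^3x_1^2+a_2q_1^3x_2^2\pmod{q_1q_2}$, while the linear part is simply $nx=nq_2x_1+nq_1x_2$. Collecting terms and dividing by $q_1q_2$, the argument separates as
$$\frac{ax^2+nx}{q_1q_2}\equiv\frac{a_1q_2^2x_1^2+nx_1}{q_1}+\frac{a_2q_1^2x_2^2+nx_2}{q_2}\pmod1,$$
where the cancellation of one factor $q_2$ (respectively $q_1$) in each fraction is precisely what converts the coefficients $a_1q_2^3$ and $a_2q_1^3$ into the shifted coefficients $a_1q_2^2$ and $a_2q_1^2$.

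Finally, since $e(\cdot)$ is additive in its argument, the exponential factors as a product of a function of $x_1$ alone and a function of $x_2$ alone. Summing the resulting double sum over $x_1$ and $x_2$ and using that the sum of a product of independent terms is the product of the sums, I obtain exactly $S(q_1,a_1q_2^2,n)\,S(q_2,a_2q_1^2,n)$, which is the claimed identity. I expect no serious obstacle here; the only points requiring care are verifying that $q_2x_1+q_1x_2$ genuinely traverses a complete residue system modulo $q_1q_2$ (which is where the coprimality $(q_1,q_2)=1$ enters), and the bookkeeping of the reductions modulo $q_1q_2$ so that precisely the cross terms drop out and the surviving coefficients come out as $a_1q_2^2$ and $a_2q_1^2$.
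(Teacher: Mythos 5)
Your proof is correct and complete: the CRT parametrization $x=q_2x_1+q_1x_2$, the vanishing of the cross terms modulo $q_1q_2$, and the resulting factorization of the exponential all check out (note the summand is periodic mod $q_1q_2$, so summing over this complete residue system is legitimate). The paper itself gives no proof, stating only that the lemma is ``elementary,'' and your argument is precisely the standard one the author is alluding to, so there is nothing further to reconcile.
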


\begin{lemma}\label{lemma32}Suppose that $(2a,q)=1$. Then we have
\begin{align*}S(q,a,n)=e\Big(-\frac{\overline{4a}\, n^2}{q}\Big)\Big(\frac{a}{q}\Big)S(q,1).\end{align*}
Moreover, $|S(q,1)|=q^{1/2}$.
\end{lemma}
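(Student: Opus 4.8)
The plan is to treat the two assertions separately, handling the identity by completing the square and the Gauss-sum evaluation by reducing to prime moduli.

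First I would complete the square in the exponent. Since $(2a,q)=1$, both $2a$ and $4a$ are invertible modulo $q$, so $\overline{2a}$ and $\overline{4a}$ exist. A direct computation gives $a(x+\overline{2a}\,n)^2\equiv ax^2+nx+\overline{4a}\,n^2\pmod q$, hence $ax^2+nx\equiv a(x+\overline{2a}\,n)^2-\overline{4a}\,n^2\pmod q$. Substituting this into the definition of $S(q,a,n)$ and pulling the constant term out of the sum yields
\[
S(q,a,n)=e\Big(-\frac{\overline{4a}\,n^2}{q}\Big)\sum_{x=1}^q e\Big(\frac{a(x+\overline{2a}\,n)^2}{q}\Big).
\]
As $x$ runs over a complete residue system modulo $q$, so does $y=x+\overline{2a}\,n$, and the remaining sum is exactly $S(q,a)=S(q,a,0)$. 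This already reduces the first assertion to proving $S(q,a)=\left(\frac{a}{q}\right)S(q,1)$.

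For the Gauss-sum evaluation I would argue multiplicatively. Taking $n=0$ in Lemma~\ref{lemma31} and choosing $a_1,a_2$ with $a_1q_2+a_2q_1\equiv a$ gives the factorization $S(q_1q_2,a)=S(q_1,aq_2)S(q_2,aq_1)$ for coprime $q_1,q_2$, so it suffices to treat prime powers. For an odd prime $p$ with $(a,p)=1$ I would count square roots: the number of $x\bmod p$ with $x^2\equiv t$ equals $1+\left(\frac{t}{p}\right)$, whence
\[
S(p,a)=\sum_{t\bmod p}\Big(1+\Big(\tfrac{t}{p}\Big)\Big)e\Big(\tfrac{at}{p}\Big)=\sum_{t\bmod p}\Big(\tfrac{t}{p}\Big)e\Big(\tfrac{at}{p}\Big)=\Big(\tfrac{a}{p}\Big)S(p,1),
\]
where the first full exponential sum vanishes and the substitution $t\mapsto\overline{a}\,t$ produces the Legendre symbol. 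For higher prime powers I would split $x=y+p^{k-1}z$ to obtain the recursion $S(p^k,a)=p\,S(p^{k-2},a)$ for $k\ge2$, which together with the base cases yields $S(p^k,a)=\left(\frac{a}{p^k}\right)S(p^k,1)$.

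Finally I would reassemble the general odd $q$ from its prime-power factors; this is the step I expect to require the most care. Plugging the prime-power identities into the multiplicative formula introduces the cross terms $\left(\frac{q_2}{q_1}\right)$ and $\left(\frac{q_1}{q_2}\right)$, and the hard part will be checking via quadratic reciprocity for the Jacobi symbol that the sign $(-1)^{\frac{q_1-1}{2}\frac{q_2-1}{2}}$ appearing there is exactly cancelled by the corresponding sign relating $S(q_1,1)S(q_2,1)$ to $S(q_1q_2,1)$; the multiplicativity $\left(\frac{a}{q_1}\right)\left(\frac{a}{q_2}\right)=\left(\frac{a}{q_1q_2}\right)$ then closes the induction. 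For the remaining claim $|S(q,1)|=q^{1/2}$ I would bypass the prime-power bookkeeping and compute directly
\[
|S(q,1)|^2=\sum_{x,y\bmod q}e\Big(\frac{(x-y)(x+y)}{q}\Big)=\sum_{u,v\bmod q}e\Big(\frac{uv}{q}\Big)=q,
\]
where the change of variables $u=x-y$, $v=x+y$ is a bijection modulo the odd number $q$ and the inner sum over $v$ equals $q$ or $0$ according as $q\mid u$ or not.
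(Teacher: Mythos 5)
Your proof is correct and complete, but it is worth noting that the paper itself offers no argument for this lemma: it simply cites Chapter 7 of Hua's \emph{Introduction to Number Theory}, where essentially the derivation you give appears. Your completion of the square (using $a\overline{2a}^2\equiv\overline{4a}$), the prime case via the square-root count $1+\left(\frac{t}{p}\right)$, the recursion $S(p^k,a)=p\,S(p^{k-2},a)$, and the computation $|S(q,1)|^2=q$ through the substitution $u=x-y$, $v=x+y$ (a bijection since $q$ is odd) are all sound. The one place where you overestimate the difficulty is the reassembly step: no quadratic reciprocity is needed. From the factorization $S(q_1q_2,a)=S(q_1,aq_2)S(q_2,aq_1)$ and the prime-power identity you get
$$S(q_1q_2,a)=\Big(\frac{a}{q_1}\Big)\Big(\frac{a}{q_2}\Big)\Big(\frac{q_2}{q_1}\Big)\Big(\frac{q_1}{q_2}\Big)S(q_1,1)S(q_2,1),$$
and specializing this \emph{same} identity at $a=1$ shows that the cross factor $\left(\frac{q_2}{q_1}\right)\left(\frac{q_1}{q_2}\right)S(q_1,1)S(q_2,1)$ is exactly $S(q_1q_2,1)$. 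Hence $S(q_1q_2,a)=\left(\frac{a}{q_1q_2}\right)S(q_1q_2,1)$ by pure substitution: the cross sign cancels identically without ever being evaluated, so the value $(-1)^{\frac{q_1-1}{2}\frac{q_2-1}{2}}$ (i.e., reciprocity, which this circle of ideas can in fact be used to \emph{prove}) never enters the induction.
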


\begin{lemma}\label{lemma33}Suppose that $(2,a)=1$. We have
\begin{align*}|S(2^r,a,n)|\le 2^{1+r/2}.\end{align*}
\end{lemma}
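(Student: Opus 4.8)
The modulus here is a power of $2$, so Lemma~\ref{lemma32} does not apply and the $2$-power case must be handled by hand. My plan is to bound $|S(2^r,a,n)|$ through its second moment; this sidesteps the failure of completing the square (since $2a$ is not invertible modulo $2^r$) and treats both parities of $n$ uniformly. Writing out $|S(2^r,a,n)|^2=\sum_{x,y \bmod 2^r}e\big((a(x^2-y^2)+n(x-y))/2^r\big)$ and substituting $x=y+t$, I would use $x^2-y^2=2yt+t^2$ and $x-y=t$ to separate the variables as
\begin{align*}|S(2^r,a,n)|^2=\sum_{t \bmod 2^r} e\Big(\frac{at^2+nt}{2^r}\Big)\sum_{y \bmod 2^r}e\Big(\frac{ayt}{2^{r-1}}\Big).\end{align*}

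The heart of the argument is the inner sum over $y$. Since $a$ is odd and the map $y\mapsto e(ayt/2^{r-1})$ has period $2^{r-1}$, this sum equals $2^r$ when $2^{r-1}\mid t$ and vanishes otherwise. For $r\ge 2$ the only residues $t$ modulo $2^r$ with $2^{r-1}\mid t$ are $t=0$ and $t=2^{r-1}$, so the whole expression collapses to two terms. The term $t=0$ contributes $1$, while for $t=2^{r-1}$ one has $at^2/2^r=a2^{r-2}\in\Z$ (here $r\ge 2$ is essential) and $nt/2^r=n/2$, so that term contributes $e(n/2)=(-1)^n$. Hence
\begin{align*}|S(2^r,a,n)|^2 = 2^r\big(1+(-1)^n\big),\end{align*}
which is $2^{r+1}$ when $n$ is even and $0$ when $n$ is odd. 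In either case $|S(2^r,a,n)|\le 2^{(r+1)/2}\le 2^{1+r/2}$, which is in fact slightly stronger than the stated bound.

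It remains to dispose of the edge case $r=1$, where the step $a2^{r-2}\in\Z$ fails. This is immediate from the direct evaluation $S(2,a,n)=1+(-1)^{a+n}$, whose modulus is $0$ or $2$, both bounded by $2^{3/2}$. The only genuine subtlety is thus the $t=2^{r-1}$ contribution: one must verify that $a2^{r-2}$ is an integer (which forces the case split at $r=2$) and then read off the factor $(-1)^n$. This factor also explains the parity dichotomy, and can be cross-checked against the observation that the translation $x\mapsto x+2^{r-1}$ multiplies each summand of $S(2^r,a,n)$ by $(-1)^n$, forcing the sum to vanish for odd $n$. I expect no further obstacles, since every step reduces to an elementary complete character sum.
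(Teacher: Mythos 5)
Your proof is correct, and it is more self-contained than the paper's: for Lemmas \ref{lemma32}--\ref{lemma34} the paper gives no argument at all, deferring to Chapter 7 of Hua's book \cite{H}. Your route --- computing $|S(2^r,a,n)|^2$ via the substitution $x=y+t$ and collapsing the inner complete character sum --- is the standard differencing evaluation, and it actually proves more than the lemma asks: for $r\ge 2$ you obtain the exact identity $|S(2^r,a,n)|^2=2^r\big(1+(-1)^n\big)$, hence the sharper bound $2^{(r+1)/2}$ together with the vanishing of the sum for odd $n$ (consistent with your translation check $x\mapsto x+2^{r-1}$). The individual steps all check out: the inner sum over $y$ modulo $2^r$ of $e\big(ayt/2^{r-1}\big)$ is indeed $2^r$ or $0$ according as $2^{r-1}\mid t$ or not (using that $a$ is odd), the only surviving residues are $t=0$ and $t=2^{r-1}$, the integrality of $a2^{r-2}$ is exactly the point where $r\ge 2$ is needed, and the edge case $r=1$ is settled by the direct computation $S(2,a,n)=1+(-1)^{a+n}$. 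What the paper's citation buys is brevity and uniformity, since Hua's machinery treats general prime-power moduli at once (which is what Lemma \ref{lemma32} requires for odd $p$); what your argument buys is a short, elementary, purely dyadic proof with a strictly stronger conclusion.
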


\begin{lemma}\label{lemma34}One has
\begin{align*}\Big|\sum_{\substack{a=1\\ (a,p)=1}}^p \Big(\frac{a}{p}\Big)
e\Big(\frac{a}{p}\Big)\Big|=p^{1/2}.\end{align*}
\end{lemma}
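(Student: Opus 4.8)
The plan is to recognize the sum as the classical quadratic Gauss sum and to compute its modulus. Write
\[
G=\sum_{\substack{a=1\\ (a,p)=1}}^{p}\Big(\frac{a}{p}\Big)e\Big(\frac{a}{p}\Big),
\]
where $p$ is the odd prime implicit in the Legendre symbol $(\frac{\cdot}{p})$; for prime $p$ the constraint $(a,p)=1$ merely removes the term $a=p$, whose symbol is $0$ anyway, so $G$ is the full Gauss sum $\sum_{a=1}^{p-1}(\frac{a}{p})e(\frac{a}{p})$. The most economical route exploits Lemma \ref{lemma32}, which has just supplied $|S(p,1)|=p^{1/2}$. For an odd prime $p$ the number of solutions of $x^2\equiv n\pmod p$ equals $1+(\frac{n}{p})$ for every $n$ (with the convention $(\frac{0}{p})=0$), hence
\[
S(p,1)=\sum_{x=1}^{p}e\Big(\frac{x^2}{p}\Big)=\sum_{n=0}^{p-1}\Big(1+\Big(\frac{n}{p}\Big)\Big)e\Big(\frac{n}{p}\Big)=\sum_{n=0}^{p-1}e\Big(\frac{n}{p}\Big)+G=G,
\]
because the complete sum of $p$-th roots of unity vanishes. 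Lemma \ref{lemma32} then yields $|G|=|S(p,1)|=p^{1/2}$ immediately.

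Should a self-contained argument be preferred, I would instead compute
\[
|G|^2=\sum_{a=1}^{p-1}\sum_{b=1}^{p-1}\Big(\frac{a}{p}\Big)\Big(\frac{b}{p}\Big)e\Big(\frac{a-b}{p}\Big).
\]
Since $b$ is invertible modulo $p$, the change of variable $a\equiv bc\pmod p$ gives $(\frac{a}{p})(\frac{b}{p})=(\frac{c}{p})$ and $a-b\equiv b(c-1)\pmod p$, so
\[
|G|^2=\sum_{c=1}^{p-1}\Big(\frac{c}{p}\Big)\sum_{b=1}^{p-1}e\Big(\frac{b(c-1)}{p}\Big).
\]
The inner sum equals $p-1$ when $c\equiv 1\pmod p$ and $-1$ otherwise, whence $|G|^2=(p-1)-\sum_{c=2}^{p-1}(\frac{c}{p})$; since $\sum_{c=1}^{p-1}(\frac{c}{p})=0$ the remaining sum equals $-1$, and therefore $|G|^2=p$.

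I do not expect any genuine obstacle, as this is the textbook evaluation of the quadratic Gauss sum. The only step demanding a moment's care is the substitution/orthogonality argument (or, in the first route, the square-counting identity $S(p,1)=G$); the rest is routine bookkeeping with the Legendre symbol and the vanishing of complete exponential sums. It is worth stressing that only the weaker statement $|G|=p^{1/2}$ is needed here, so one may avoid the finer determination of the sign (and factor $i$) of $G$ according to $p\bmod 4$.
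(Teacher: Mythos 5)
Your proposal is correct on both routes. Note, however, that the paper does not actually prove this lemma: it simply states that the proofs of Lemmas \ref{lemma32}--\ref{lemma34} ``can be found in Chapter 7 of \cite{H}'', so your write-up supplies an argument the paper delegates to the literature. Your first route is particularly well suited to the paper's setting, since it reuses Lemma \ref{lemma32} (itself quoted from Hua): the identity
\begin{align*}
S(p,1)=\sum_{n=0}^{p-1}\Big(1+\Big(\tfrac{n}{p}\Big)\Big)e\Big(\tfrac{n}{p}\Big)=G
\end{align*}
is valid for odd $p$ because $x^2\equiv n \pmod p$ has exactly $1+(\tfrac{n}{p})$ solutions, and then $|G|=|S(p,1)|=p^{1/2}$ is immediate. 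Your second route, the standard computation of $|G|^2$ via the substitution $a\equiv bc\pmod p$ and orthogonality, is the self-contained textbook evaluation and is exactly the kind of argument Hua's Chapter 7 contains; each step (the inner sum equal to $p-1$ or $-1$, and $\sum_{c=2}^{p-1}(\tfrac{c}{p})=-1$) checks out. One small point worth keeping explicit, as you did: the statement as written requires $p$ to be an odd prime (it fails for $p=2$), which is consistent with the paper's only use of the lemma, inside the proof of Lemma \ref{lemma37} where $p>2$ is assumed.
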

The proof of Lemma \ref{lemma31} is elementary, and the proofs of
Lemmas \ref{lemma32}-\ref{lemma34} can be found in Chapter 7 of
\cite{H}. We
define\begin{align*}T(q;n_1,n_2,n_3,m)=\sum_{\substack{a=1\\
(a,q)=1}}^q
S(q,a,n_1)S(q,a,n_2)S(q,a,n_3)e\Big(\frac{\overline{a}\,m}{q}\Big),\end{align*}
and write $T(q)=T(q;0,0,0,0)$.
\begin{lemma}\label{lemma35}Suppose that $(q_1,q_2)=1$. Then one
has
\begin{align*}T(q_1q_2;n_1,n_2,n_3,m)=T(q_1;n_1,n_2,n_3,m)T(q_2;n_1,n_2,n_3,m).\end{align*}
\end{lemma}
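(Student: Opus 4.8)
The plan is to derive the multiplicativity of $T$ from the Chinese Remainder Theorem together with the factorization supplied by Lemma \ref{lemma31}. Since $(q_1,q_2)=1$, every residue $a$ modulo $q_1q_2$ with $(a,q_1q_2)=1$ can be written uniquely as $a\equiv a_1q_2+a_2q_1\pmod{q_1q_2}$, where $a_1$ runs over reduced residues modulo $q_1$ and $a_2$ over reduced residues modulo $q_2$. Indeed $a\equiv a_1q_2\pmod{q_1}$ and $a\equiv a_2q_1\pmod{q_2}$, and since $(q_2,q_1)=(q_1,q_2)=1$ the condition $(a,q_1q_2)=1$ is equivalent to $(a_1,q_1)=(a_2,q_2)=1$. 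First I would substitute this parametrization into the definition of $T(q_1q_2;n_1,n_2,n_3,m)$, converting the single sum over $a$ into a double sum over $a_1$ and $a_2$.

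Next I would factor the summand into a purely $q_1$-dependent piece and a purely $q_2$-dependent piece. Applying Lemma \ref{lemma31} to each of the three Gauss-type factors gives
\[
S(q_1q_2,a,n_i)=S(q_1,a_1q_2^2,n_i)\,S(q_2,a_2q_1^2,n_i)\qquad(i=1,2,3),
\]
so the product of the three $S$-factors splits according to the variables $a_1$ and $a_2$. For the remaining exponential I would decompose the inverse by CRT: writing $\overline{a}$ for the inverse of $a$ modulo $q_1q_2$, the relation $a\overline{a}\equiv 1\pmod{q_1q_2}$ forces
\[
\frac{\overline{a}\,m}{q_1q_2}\equiv \frac{\overline{a_1q_2^2}\,m}{q_1}+\frac{\overline{a_2q_1^2}\,m}{q_2}\pmod 1,
\]
which one verifies by reducing modulo $q_1$ and modulo $q_2$ separately. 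Hence $e(\overline{a}\,m/(q_1q_2))$ also factors, and the entire summand becomes a product of a function of $a_1$ alone and a function of $a_2$ alone; the double sum therefore factors as a product of two single sums.

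Finally I would identify each single sum with the corresponding $T(q_j;\cdots)$. Because $(q_2,q_1)=1$, multiplication by $q_2^2$ permutes the reduced residues modulo $q_1$, so the bijective change of variable $b_1\equiv a_1q_2^2\pmod{q_1}$ turns the $a_1$-sum into $\sum_{(b_1,q_1)=1}S(q_1,b_1,n_1)S(q_1,b_1,n_2)S(q_1,b_1,n_3)\,e(\overline{b_1}\,m/q_1)=T(q_1;n_1,n_2,n_3,m)$, and symmetrically the $a_2$-sum becomes $T(q_2;n_1,n_2,n_3,m)$. The only point demanding care is the bookkeeping in the middle step: one must confirm that the arguments $a_1q_2^2$ and $a_2q_1^2$ produced by Lemma \ref{lemma31} are exactly compatible with the CRT splitting of $\overline{a}$, so that the single substitution $b_j\equiv a_jq_{3-j}^2$ simultaneously normalizes both the $S$-factors and the inverse appearing in the exponential. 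Once this compatibility is checked, the claimed factorization follows at once.
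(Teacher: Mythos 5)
Your proof is correct and follows essentially the same route as the paper: both parametrize the reduced residues modulo $q_1q_2$ by $a\equiv a_1q_2+a_2q_1$, apply Lemma \ref{lemma31} to split the three Gauss sums, factor $e(\overline{a}\,m/(q_1q_2))$ into $e(\overline{a_1q_2^2}\,m/q_1)\,e(\overline{a_2q_1^2}\,m/q_2)$, and finish with the change of variables $a_jq_{3-j}^2\mapsto a_j$ over reduced residues. The compatibility point you single out---that the arguments produced by Lemma \ref{lemma31} match the CRT splitting of the inverse, so one substitution normalizes both the $S$-factors and the exponential---is exactly the (implicit) middle step of the paper's computation.
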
\begin{proof}The proof is standard. We have
\begin{align*}&\ T(q_1q_2;n_1,n_2,n_3,m)\\=&\sum_{\substack{a=1\\
(a,q_1q_2)=1}}^{q_1q_2}
S(q_1q_2,a,n_1)S(q_1q_2,a,n_2)S(q_1q_2,a,n_3)e\Big(\frac{\overline{a}\,m}{q_1q_2}\Big)
\\=&\sum_{\substack{a=1\\
(a_1,q_1)=1}}^{q_1}\sum_{\substack{a_2=1\\
(a_2,q_2)=1}}^{q_2}\prod_{j=1}^3S(q_1q_2,a_1q_2+a_2q_1,n_j)
e\Big(\frac{\overline{a_1q_2+a_2q_1}\,m}{q_1q_2}\Big).\end{align*}
By Lemma \ref{lemma31}, we obtain
\begin{align*}&\ T(q_1q_2;n_1,n_2,n_3,m)
\\=&\sum_{\substack{a=1\\
(a_1,q_1)=1}}^{q_1}\sum_{\substack{a_2=1\\
(a_2,q_2)=1}}^{q_2}\prod_{j=1}^3\Big(S(q_1,a_1q_2^2,n_j)S(q_2,a_2q_1^2,n_j)\Big)
e\Big(\frac{\overline{a_1q_2^2}\,m}{q_1}\Big)
e\Big(\frac{\overline{a_2q_1^2}\,m}{q_2}\Big)
\\=&\sum_{\substack{a=1\\
(a_1,q_1)=1}}^{q_1}\sum_{\substack{a_2=1\\
(a_2,q_2)=1}}^{q_2}\prod_{j=1}^3\Big(S(q_1,a_1,n_j)S(q_2,a_2,n_j)\Big)
e\Big(\frac{\overline{a_1}\,m}{q_1}\Big)
e\Big(\frac{\overline{a_2}\,m}{q_2}\Big)
\\=&\ T(q_1;n_1,n_2,n_3,m)T(q_2;n_1,n_2,n_3,m).\end{align*}
The desired conclusion is established.
\end{proof}

In view of Lemma \ref{lemma35}, to obtain the upper bound for
$T(q;n_1,n_2,n_3,m)$, it suffices to consider
$T(p^r;n_1,n_2,n_3,m)$.
\begin{lemma}\label{lemma36}One has
\begin{align*}|T(2^r;n_1,n_2,n_3,m)|\le 2^{2+5r/2}.\end{align*}
\end{lemma}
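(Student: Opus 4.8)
The plan is to bound $T(2^r;n_1,n_2,n_3,m)$ by crude triangle-inequality estimates, since the real content has already been packaged into Lemma \ref{lemma33}. First I would apply the triangle inequality to the defining sum over $a$, discarding the phase factor via $|e(\overline{a}\,m/2^r)|=1$, to obtain
\begin{align*}|T(2^r;n_1,n_2,n_3,m)|\le \sum_{\substack{a=1\\ (a,2^r)=1}}^{2^r}\prod_{j=1}^3\big|S(2^r,a,n_j)\big|.\end{align*}

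Next I would observe that the summation condition $(a,2^r)=1$ forces $a$ to be odd, so the hypothesis $(2,a)=1$ of Lemma \ref{lemma33} is met for every term in the sum. Invoking that lemma gives $|S(2^r,a,n_j)|\le 2^{1+r/2}$ for each $j\in\{1,2,3\}$, whence each product of three Gauss sums is bounded by $(2^{1+r/2})^3=2^{3+3r/2}$, uniformly in $a$ and in $n_1,n_2,n_3$.

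Finally I would count the terms: the number of residues $a$ modulo $2^r$ with $(a,2^r)=1$ is exactly $\varphi(2^r)=2^{r-1}$. Multiplying the term bound by the number of terms yields
\begin{align*}|T(2^r;n_1,n_2,n_3,m)|\le 2^{r-1}\cdot 2^{3+3r/2}=2^{2+5r/2},\end{align*}
which is the claimed estimate. There is no genuine obstacle here; the argument is entirely elementary once Lemma \ref{lemma33} is in hand, the only points requiring care being the verification that $a$ odd follows from $(a,2^r)=1$ and the correct tally $\varphi(2^r)=2^{r-1}$ of admissible $a$.
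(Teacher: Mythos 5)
Your proof is correct and is precisely the argument the paper intends: the paper's entire proof reads ``This follows from Lemma \ref{lemma33},'' and your expansion---triangle inequality, the bound $|S(2^r,a,n_j)|\le 2^{1+r/2}$ from Lemma \ref{lemma33} applied to each of the three factors, and the count $\varphi(2^r)=2^{r-1}$ of admissible $a$---is exactly how that one-line proof unpacks, with the exponents combining correctly to give $2^{2+5r/2}$.
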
\begin{proof}This follows from Lemma \ref{lemma33}.
\end{proof}

\begin{lemma}\label{lemma37}Suppose that $p>2$. Then one has
\begin{align}\label{e31}|T(p^r;n_1,n_2,n_3,m)|\le p^{5r/2}\end{align}
and\begin{align}\label{e32}|T(p;n_1,n_2,n_3,m)|\le
p^{2}.\end{align}
\end{lemma}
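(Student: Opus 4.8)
The plan is to apply Lemma \ref{lemma32} to each of the three quadratic Gauss sums and thereby collapse $T(p^r;n_1,n_2,n_3,m)$ into a single character-twisted exponential sum. Since $p>2$, the coprimality $(a,p^r)=1$ automatically gives $(2a,p^r)=1$, so Lemma \ref{lemma32} applies and yields $S(p^r,a,n_j)=e(-\overline{4a}\,n_j^2/p^r)(\frac{a}{p^r})S(p^r,1)$ for $j=1,2,3$. Multiplying these three identities, using $(\frac{a}{p^r})^3=(\frac{a}{p^r})$ together with $\overline{4a}=\overline{4}\,\overline{a}$, and absorbing the factor $e(\overline{a}\,m/p^r)$ from the definition of $T$, I would arrive at
\begin{align*}T(p^r;n_1,n_2,n_3,m)=S(p^r,1)^3\sum_{\substack{a=1\\(a,p^r)=1}}^{p^r}\Big(\frac{a}{p^r}\Big)e\Big(\frac{\overline{a}\,M}{p^r}\Big),\end{align*}
where $M\equiv m-\overline{4}(n_1^2+n_2^2+n_3^2)\pmod{p^r}$. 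Replacing $a$ by $\overline{a}$, a bijection of the reduced residues under which the quadratic symbol is preserved, rewrites the inner sum as $\sum_{(b,p^r)=1}(\frac{b}{p^r})e(bM/p^r)$.

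To prove (\ref{e31}) I would bound this twisted sum trivially: it has $\phi(p^r)\le p^r$ terms, each of modulus $1$. Since $|S(p^r,1)|^3=p^{3r/2}$ by Lemma \ref{lemma32}, this immediately gives $|T(p^r;n_1,n_2,n_3,m)|\le p^{3r/2}\cdot p^r=p^{5r/2}$.

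For the sharper estimate (\ref{e32}) the trivial bound loses a factor $p^{1/2}$, so when $r=1$ I would instead evaluate the Gauss sum exactly. If $p\mid M$, then $e(bM/p)=1$ and the sum degenerates to $\sum_{b=1}^{p-1}(\frac{b}{p})=0$, whence $T(p;n_1,n_2,n_3,m)=0$. If $p\nmid M$, the substitution $c\equiv bM\pmod p$ pulls out the factor $(\frac{M}{p})$ and reduces the sum to $\sum_{(c,p)=1}(\frac{c}{p})e(c/p)$, which has modulus exactly $p^{1/2}$ by Lemma \ref{lemma34}. Thus $|T(p;n_1,n_2,n_3,m)|=p^{3/2}\cdot p^{1/2}=p^2$, and (\ref{e32}) follows in both cases. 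The argument is essentially bookkeeping: the only point requiring care is tracking the inverses and the quadratic symbol through the application of Lemma \ref{lemma32}, after which (\ref{e31}) is purely trivial and (\ref{e32}) rests on the classical modulus $p^{1/2}$ of the quadratic Gauss sum supplied by Lemma \ref{lemma34}.
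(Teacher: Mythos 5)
Your proof is correct and follows essentially the same route as the paper: apply Lemma \ref{lemma32} to collapse the three Gauss sums into a character-twisted sum, bound it trivially for (\ref{e31}), and for (\ref{e32}) reduce to the classical Gauss sum of Lemma \ref{lemma34} after a change of variable. The only cosmetic difference is that you treat the degenerate case $p\mid M$ separately, whereas the paper absorbs it into the identity via the vanishing Legendre symbol $\big(\frac{n_1^2+n_2^2+n_3^2-4m}{p}\big)$.
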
\begin{proof}By Lemma \ref{lemma32}, for $p>2$ one has
\begin{align*}S(p^r,a,n)=e\Big(-\frac{\overline{4a}\, n^2}{p^r}\Big)\Big(\frac{a}{p^r}\Big)S(p^r,1).\end{align*}
Therefore,
\begin{align*}|T(p^r;n_1,n_2,n_3,m)|\le \sum_{\substack{a=1\\ (a,p^r)=1}}^{p^r}p^{3r/2}\le p^{5r/2}.\end{align*}
This confirms (\ref{e31}). To prove (\ref{e32}), we deduce that
\begin{align*}T(p;n_1,n_2,n_3,m)=&S(p,1)^3\sum_{\substack{a=1\\
(a,p)=1}}^pe\Big(\frac{-\overline{4a}\,(n_1^2+n_2^2+n_3^2-4m)}{p}\Big)\Big(\frac{a}{p}\Big)
\\=&S(p,1)^3\Big(\frac{-1}{p}\Big)\Big(\frac{n_1^2+n_2^2+n_3^2-4m}{p}\Big)\sum_{\substack{b=1\\
(b,p)=1}}^pe\Big(\frac{b}{p}\Big)\Big(\frac{b}{p}\Big).\end{align*}
Then (\ref{e32}) follows from Lemma \ref{lemma32} and Lemma
\ref{lemma34}.
\end{proof}

\begin{lemma}\label{lemma38}Suppose that $q=q_1q_2$ with $(q_1,q_2)=1$, $q_1$ square-free and $q_2$ square-full.
 Then one has
\begin{align*}|T(q;n_1,n_2,n_3,m)|\le 4q_1^2q_2^{5/2}.\end{align*}
\end{lemma}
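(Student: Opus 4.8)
The plan is to exploit the multiplicativity of $T$ recorded in Lemma~\ref{lemma35} to reduce the estimate to a product of local bounds at prime powers, and then to track the constant carefully at the single prime that can divide the square-full or the square-free factor. Since $(q_1,q_2)=1$, Lemma~\ref{lemma35} gives
\begin{align*}|T(q;n_1,n_2,n_3,m)|=|T(q_1;n_1,n_2,n_3,m)|\,|T(q_2;n_1,n_2,n_3,m)|,\end{align*}
and a further iterated application of the same lemma factorises $T(q_1)$ and $T(q_2)$ into products over the prime powers exactly dividing $q_1$ and $q_2$. Because $q_1$ is square-free, each prime in its factorisation occurs to the first power, whereas $q_2$ is square-full, so each prime there occurs to a power $r\ge2$.

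First I would dispose of the odd part. For an odd prime $p\mid q_1$ the bound (\ref{e32}) gives $|T(p;n_1,n_2,n_3,m)|\le p^{2}$, and multiplying over the odd primes dividing $q_1$ yields a factor $(q_1^{\ast})^{2}$, where $q_1^{\ast}$ denotes the odd part of $q_1$. For an odd prime power $p^{r}\,\|\,q_2$ with $r\ge2$ the bound (\ref{e31}) gives $|T(p^{r};n_1,n_2,n_3,m)|\le p^{5r/2}$, and multiplying over the odd prime powers dividing $q_2$ yields $(q_2^{\ast})^{5/2}$ with $q_2^{\ast}$ the odd part of $q_2$. Hence the whole odd contribution is at most $(q_1^{\ast})^{2}(q_2^{\ast})^{5/2}$.

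It then remains to handle the prime $2$, which by coprimality divides at most one of $q_1,q_2$. If $2^{r}\,\|\,q_2$ with $r\ge2$, then Lemma~\ref{lemma36} gives $|T(2^{r};n_1,n_2,n_3,m)|\le 2^{2+5r/2}=4\,(2^{r})^{5/2}$; combining with the odd part, and using $q_1^{\ast}=q_1$ together with $(q_2^{\ast})^{5/2}(2^{r})^{5/2}=q_2^{5/2}$, one obtains $|T(q)|\le 4\,q_1^{2}q_2^{5/2}$. If $2\nmid q$, the odd estimate already gives $|T(q)|\le q_1^{2}q_2^{5/2}$. The delicate case is $2\,\|\,q_1$: here Lemma~\ref{lemma36} with $r=1$ only furnishes $|T(2;n_1,n_2,n_3,m)|\le 2^{9/2}$, which is a shade too large to deliver the constant $4$. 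I would instead evaluate the Gauss sum modulo $2$ directly: one checks that $S(2,1,n)=1+e((1+n)/2)$ equals $0$ or $2$ according as $n$ is even or odd, so that $|S(2,1,n)|\le2$ and hence $|T(2;n_1,n_2,n_3,m)|\le 2^{3}=8$. Combining with the odd contribution and using $(q_1^{\ast})^{2}=q_1^{2}/4$ gives $|T(q)|\le 8\,(q_1^{\ast})^{2}q_2^{5/2}=2\,q_1^{2}q_2^{5/2}\le 4\,q_1^{2}q_2^{5/2}$.

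In each case the stated bound follows, and the one genuine obstacle is the constant at the prime $2$: the estimate coming from Lemma~\ref{lemma36} (ultimately from the bound $|S(2^{r},a,n)|\le 2^{1+r/2}$ of Lemma~\ref{lemma33}) is lossy at $r=1$, so securing the clean factor $4$ requires replacing it by the exact value of the quadratic Gauss sum modulo $2$. Every other prime power contributes exactly the power predicted by Lemmas~\ref{lemma36} and~\ref{lemma37}, and the factor $4$ appears at most once because $2$ divides at most one of $q_1,q_2$.
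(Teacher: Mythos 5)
Your proof is correct and follows the same route as the paper's: factor $T$ via Lemma~\ref{lemma35} and insert the local bounds of Lemmas~\ref{lemma36} and~\ref{lemma37}. In fact your treatment of the prime $2$ is more careful than the paper's one-line proof (``follows from Lemmas~\ref{lemma35}--\ref{lemma37} immediately''): in the case $2\,\|\,q_1$, Lemma~\ref{lemma36} with $r=1$ only gives $|T(2;n_1,n_2,n_3,m)|\le 2^{9/2}>16$, which yields the constant $2^{5/2}$ rather than $4$, so your direct evaluation $S(2,1,n)\in\{0,2\}$, whence $|T(2;n_1,n_2,n_3,m)|\le 8$, is exactly what is needed to secure the stated constant (though any absolute constant would serve equally well in the subsequent applications).
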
\begin{proof}This follows from Lemmas
\ref{lemma35}-\ref{lemma37} immediately.
\end{proof}
From now on, we assume that $q$ has the decomposition $q=q_1q_2$
with $(q_1,q_2)=1$, where $q_1$ is square-free and $q_2$ is
square-full.

\section{Approximations for $f(\alpha)$ and $h(\alpha)$}

\begin{lemma}\label{lemma41}
Suppose that $(a,q)=1$, $q\le P$ and $|\beta|\le \frac{1}{qP}$.
Then we have
\begin{align}\label{e41}f(\frac{a}{q}+\beta)=\frac{S(q,a)}{q}\int_0^Xe(x^2\beta)dx
+\sum_{-3q/2<b\le 3q/2}S(q,a,b)E(b,q,\beta),\end{align} where
$E(b,q,\beta)$ satisfies
\begin{align}\label{e42}\sum_{-3q/2<b\le 3q/2}|E(b,q,\beta)|\ll \log (q+2).\end{align}
\end{lemma}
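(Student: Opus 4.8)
The plan is to pass from the Weyl sum to the predicted integral by sorting $m$ into residue classes modulo $q$ and then applying Poisson summation to each resulting inner sum. Writing $m=qk+r$ with $1\le r\le q$ and $k\ge 0$, the periodicity $e(am^2/q)=e(ar^2/q)$ lets me factor
$$f(a/q+\beta)=\sum_{r=1}^q e\Big(\frac{ar^2}{q}\Big)\sum_{\substack{k\ge 0\\ 1\le qk+r\le X}} e\big((qk+r)^2\beta\big).$$
First I would apply Poisson summation (after dealing with the sharp endpoints) to the inner sum over $k$; substituting $x=qk+r$ turns the $b$-th dual term into $q^{-1}e(br/q)\int_1^X e(x^2\beta-bx/q)\,dx$. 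Summing over $r$ collapses $\sum_{r=1}^q e(ar^2/q)e(br/q)$ into exactly $S(q,a,b)$, so that $f(a/q+\beta)=\sum_b q^{-1}S(q,a,b)\int_1^X e(x^2\beta-bx/q)\,dx$, with $b$ running over the integers.

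The term $b=0$ is the main term: it equals $q^{-1}S(q,a)\int_1^X e(x^2\beta)\,dx$, and replacing $\int_1^X$ by $\int_0^X$ costs only $q^{-1}|S(q,a)|\,\big|\int_0^1 e(x^2\beta)\,dx\big|\ll q^{-1}$, which I would absorb into the $b=0$ error coefficient $E(0,q,\beta)$. This produces the announced main term $q^{-1}S(q,a)\int_0^X e(x^2\beta)\,dx$.

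For $b\ne 0$ the decisive point is that the phase $\phi(x)=x^2\beta-bx/q$ has no stationary point in $[1,X]$: its critical point is $x_0=b/(2q\beta)$, and the hypotheses $q\le P$ and $|\beta|\le 1/(qP)$ force $|x_0|\ge |b|P/2>X$ since $P=\lfloor 5X\rfloor$. Consequently $|\phi'(x)|=|2x\beta-b/q|\gg |b|/q$ throughout $[1,X]$, and a single integration by parts gives $\int_1^X e(\phi)\,dx\ll q/|b|$, whence the coefficient $E(b,q,\beta):=q^{-1}\int_1^X e(\phi)\,dx$ satisfies $|E(b,q,\beta)|\ll 1/|b|$. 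Repeating the integration by parts gains a further factor $q/|b|<2/3$ once $|b|>3q/2$, so those terms decay geometrically and their total contribution is negligible and may be folded into the retained coefficients; this is what truncates the sum to $-3q/2<b\le 3q/2$. Summing the surviving bounds yields $\sum_{-3q/2<b\le 3q/2}|E(b,q,\beta)|\ll q^{-1}+\sum_{1\le |b|\le 3q/2}|b|^{-1}\ll \log(q+2)$, as required.

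The main obstacle I anticipate is the careful execution of Poisson summation against the sharp cutoff $1\le m\le X$, together with the bookkeeping needed to turn the infinite dual sum into the exact finite identity over $-3q/2<b\le 3q/2$: the endpoint contributions and the grouping of the rapidly decaying tail into the retained coefficients must be arranged so that no error is lost. By contrast, the stationary-phase input is robust, since the separation $|x_0|>X$ holds with room to spare, so the integration-by-parts estimates are routine once the no-stationary-point fact is in place.
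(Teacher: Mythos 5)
Your opening reduction (sorting $m$ into residue classes mod $q$) and the shape of the final identity match the paper, but the step where you truncate the dual sum to $-3q/2<b\le 3q/2$ is wrong, and it is the crux of the lemma. With the sharp cutoff $1\le x\le X$, the integral $I(b)=\int_1^X e(x^2\beta-bx/q)\,dx$ does \emph{not} decay geometrically under repeated integration by parts: after one integration by parts the boundary terms $\bigl[e(\phi)/(2\pi i\phi')\bigr]_1^X$ are genuinely of size $\asymp q/|b|$, and no further integration by parts improves them (only the remaining integral gets smaller). For instance at $\beta=0$, which the hypotheses allow, one has exactly $I(b)=\frac{q}{2\pi i b}\bigl(e(-b/q)-e(-bX/q)\bigr)\asymp q/|b|$ for typical $b$. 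Moreover, by Lemma \ref{lemma32}, $|S(q,a,b)|=q^{1/2}$ for \emph{every} $b$ when $q$ is odd, so the tail you propose to discard satisfies
\begin{align*}
\sum_{|b|>3q/2}\frac{|S(q,a,b)|\,|I(b)|}{q}\ \gg\ q^{1/2}\sum_{|b|>3q/2}\frac{1}{|b|}\ =\ \infty .
\end{align*}
The tail is not absolutely convergent, so it can be neither bounded termwise nor ``folded into the retained coefficients'' by size estimates; the truncation genuinely requires cancellation among the shifted integrals $I(b+hq)$, $h\in\Z$. You have identified the wrong step as routine: the no-stationary-phase bound is indeed easy, but the truncation is precisely the hard analytic content.

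The paper avoids this trap by never writing an infinite sum at all. It first expands $f$ over a complete residue system using orthogonality of additive characters, giving the \emph{finite} identity $f(a/q+\beta)=q^{-1}\sum_{-q/2<b\le q/2}S(q,a,b)F(b)$, where $F(b)=\sum_{x\le X}e(x^2\beta-bx/q)$ is an exponential \emph{sum}, and only then applies Lemma 4.2 of Vaughan \cite{V} (a truncated Poisson-summation lemma) to each $F(b)$. Since $|2x\beta-b/q|<1$ on the range of summation, that lemma replaces $F(b)$ by just the three integrals $I(b-q),I(b),I(b+q)$ with an error $E_1(b,q,\beta)\ll 1$ per residue class --- the cancellation you are missing is built into the proof of that lemma --- and the range $-3q/2<b\le 3q/2$ then appears automatically after re-indexing. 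To repair your argument you would have to either smooth the cutoff (after which geometric decay of the dual terms is true, but the main term changes), or prove a truncated Poisson formula with bounded error, which amounts to quoting Vaughan's Lemma 4.2 as the paper does.
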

\begin{proof}The result is implied by the proof of Theorem 4.1 of
Vaughan \cite{V}. One has
\begin{align*}f(\frac{a}{q}+\beta)=\sum_{x\le X}e(x^2\beta)e(ax^2/q)
=\sum_{x\le X}e(x^2\beta)\sum_{\substack{m=1 \\ m\equiv
x\pmod{q}}}^qe(am^2/q).\end{align*} Then we can obtain
\begin{align}\label{e43}f(\frac{a}{q}+\beta)=\frac{1}{q}\sum_{-q/2<b\le q/2}S(q,a,b)F(b),\end{align}
where \begin{align*}F(b)=\sum_{x\le
X}e(x^2\beta-bx/q).\end{align*} Note that $|2x\beta-b/q|<1$ for
$x\le X$. By Lemma 4.2 in \cite{V},
\begin{align}\label{e44}F(b)=\sum_{-1\le h\le
1}I(b+hq)+E_1(b,q,\beta),\end{align}
where\begin{align*}I(b)=\int_0^X e(x^2\beta-bx/q)dx,\end{align*}
and $E_1(b,q,\beta)$ satisfies
\begin{align}\label{e45}E_1(b,q,\beta)\ll 1.\end{align}
When $b\not=0$ and $0\le x\le X$, $|2x\beta-b/q|\ge
\frac{1}{2}|b/q|$. Then by integration by parts,
\begin{align}\label{e46}I(b)\ll |q/b| \ \ \textrm{ for}\ \ b\not=0.\end{align}
Set $B=3q/2$. On inserting (\ref{e44}) into (\ref{e43}), we obtain
\begin{align*}&f(\frac{a}{q}+\beta)-\frac{S(q,a)}{q}\int_0^Xe(x^2\beta)dx
\\=&\frac{1}{q}\sum_{\substack{-B<b\le B\\ b\not=0}}S(q,a,b)I(b)
+\frac{1}{q}\sum_{-q/2<b\le
q/2}S(q,a,b)E_1(b,q,\beta).\end{align*}
Define\begin{align*}E(b,q,\beta)=\begin{cases}\frac{1}{q}\big(I(b)+E_1(b,q,\beta)\big),
&\ \textrm{ if }\ -q/2<b\le q/2 \ \textrm{ and }\ b\not=0,
\\ \frac{1}{q}E_1(b,q,\beta), &\ \textrm{ if }\ b=0,
\\ \frac{1}{q}I(b), &\ \textrm{ otherwise }.\end{cases}\end{align*}
We can
see\begin{align*}f(\frac{a}{q}+\beta)=\frac{S(q,a)}{q}\int_0^X
e(x^2\beta)dx+\sum_{-B<b\le B}S(q,a,b)E(b,q,\beta).\end{align*} It
follows from (\ref{e45}) and (\ref{e46}) that
\begin{align*}\sum_{-B<b\le B}|E(b,q,\beta)|\ll \log(q+2).\end{align*}
The proof is completed.
\end{proof}

\begin{lemma}\label{lemma42}
Suppose that $(a,q)=1$, $q\le P$ and $|\beta|\le \frac{1}{qP}$.
Then we have
\begin{align}\label{e47}h(\frac{a}{q}+\beta)=&q^{-1}\int (\log x+2\gamma-2\log
q)e(x\beta)w(x)dx
\\ & +\sum_{|n|\not=0}e\Big(-\frac{\overline{a}\, n}{q}\Big)\Delta(n,q,\beta),\notag\end{align} where
$\Delta(n,q,\beta)$ satisfies
\begin{align}\label{e48}\sum_{|n|\not=0}|\Delta(n,q,\beta)|\ll q\log^2 (q+2)+|\beta|^2q^{3/2}X^{7/2}.\end{align}
\end{lemma}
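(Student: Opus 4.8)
The plan is to apply the Voronoi summation formula (\ref{e22}) to the smooth, compactly supported function $g(x)=w(x)e(x\beta)$. Since $h(a/q+\beta)=\sum_{n}\tau(n)e(an/q)\,g(n)$, formula (\ref{e22}) immediately produces the main term $q^{-1}\int(\log x+2\gamma-2\log q)e(x\beta)w(x)dx$ of (\ref{e47}) together with the two Bessel transform sums. I would then set $\Delta(n,q,\beta)=\tau(n)\widetilde{g}_{Y}(n)$ for $n\ge1$ and, after reindexing the $K$-sum by $n\mapsto-n$ so that $e(\overline{a}n/q)=e(-\overline{a}(-n)/q)$, set $\Delta(n,q,\beta)=\tau(|n|)\widetilde{g}_{K}(|n|)$ for $n\le-1$. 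This reproduces the exact shape of (\ref{e47}) and reduces (\ref{e48}) to the bound $\sum_{n\ge1}\tau(n)|\widetilde{g}_{Y}(n)|+\sum_{n\ge1}\tau(n)|\widetilde{g}_{K}(n)|\ll q\log^{2}(q+2)+|\beta|^{2}q^{3/2}X^{7/2}$.

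For the $K$-transform I would bound by absolute values. Writing $c=4\pi\sqrt{n}/q$ and substituting $x=t^{2}$, the recurrence (\ref{e26}) with $\nu=0$ gives $\int_{0}^{\infty}uK_{0}(u)du=1$, whence $|\widetilde{g}_{K}(n)|\le\frac{q}{2\pi^{2}n}\int_{0}^{\infty}uK_{0}(u)du=\frac{q}{2\pi^{2}n}$. For $n\gg q^{2}$ the argument satisfies $ct\ge1$, forcing exponential decay by (\ref{e25}), so those terms are negligible; the surviving range $n\ll q^{2}$ contributes $\ll q\sum_{n\ll q^{2}}\tau(n)/n\ll q\log^{2}(q+2)$.

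The $Y$-transform is the main obstacle, since $Y_{0}$ does not decay and a naive absolute bound diverges. I would split the defining integral at $u=4\pi\sqrt{xn}/q=1$, i.e. at $x_{0}=q^{2}/(16\pi^{2}n)$. On $x\le x_{0}$ the estimate $Y_{0}(u)=\frac{2}{\pi}\log(u/2)+O(1)$ from (\ref{e23}) applies, and bounding absolutely (exactly as for $K$) yields $\ll q\sum_{n\ll q^{2}}\tau(n)/n\ll q\log^{2}(q+2)$. On $x>x_{0}$ I would exploit oscillation: after substituting $x=t^{2}$ I integrate by parts twice using (\ref{e26}) in the forms $tY_{0}(ct)=\frac{1}{c}\frac{d}{dt}(tY_{1}(ct))$ and $t^{2}Y_{1}(ct)=\frac{1}{c}\frac{d}{dt}(t^{2}Y_{2}(ct))$. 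Because $w$ and $w'$ vanish at both endpoints of the support, the outer boundary contributions vanish, only the interior endpoint $t_{0}:=1/c$ survives, and the integral collapses to $\frac{4}{c^{2}}\int g''(t^{2})t^{3}Y_{2}(ct)\,dt$ over $t>t_{0}$, where $ct\ge1$ guarantees $|Y_{2}(ct)|\ll(ct)^{-1/2}$ by (\ref{e25}).

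The decisive point is that $g''(x)=\bigl(w''(x)+4\pi i\beta w'(x)-4\pi^{2}\beta^{2}w(x)\bigr)e(x\beta)$, so the double integration by parts manufactures the factor $\beta^{2}$. The term carrying $-4\pi^{2}\beta^{2}w$ is supported throughout $[t_{0},2X]$, and using $t^{3}(ct)^{-1/2}\ll c^{-1/2}t^{5/2}$ with $\int_{0}^{2X}t^{5/2}dt\ll X^{7/2}$ it contributes $\ll\frac{1}{q}\beta^{2}c^{-5/2}X^{7/2}\ll\beta^{2}q^{3/2}X^{7/2}n^{-5/4}$ to $\widetilde{g}_{Y}(n)$; summing the convergent series $\sum_{n}\tau(n)n^{-5/4}$ produces precisely the term $|\beta|^{2}q^{3/2}X^{7/2}$. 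The residual terms (those with $w''$ or $\beta w'$, and the boundary contributions at $t_{0}$) are each supported in bounded $x$-ranges near $1$ or near $X^{2}$, or carry an extra factor $c^{-2}$ from the endpoint; tracking the range of $n$ for which each is nonzero — for instance $n\gg q^{2}$ for the piece near $x\sim1$, so that $\sum\tau(n)n^{-5/4}$ is a convergent tail of size $\ll q^{-1/2}\log q$ — shows every one of them is $\ll q\log^{2}(q+2)$ and is absorbed. The genuinely delicate part is this bookkeeping: choosing the split at $u=1$ so that the Bessel arguments stay $\ge1$ in the oscillatory regime (thereby avoiding the $u^{-2}$ singularity of $Y_{2}$ at the origin), and checking that a double rather than single integration by parts is exactly what is needed both to force $\sum_{n}\tau(n)|\widetilde{g}_{Y}(n)|$ to converge and to extract the sharp $|\beta|^{2}$ dependence.
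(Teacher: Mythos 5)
Your proposal is correct, and its engine is the same as the paper's: Voronoi summation with $g(x)=w(x)e(x\beta)$, then two integrations by parts through the recurrence (\ref{e26}), so that the gain $c^{-2}\asymp q^2/|n|$ together with $|B_2(u)|\ll u^{-1/2}$ produces both the convergence of $\sum_n\tau(n)|\Delta(n,q,\beta)|$ and the $|\beta|^2q^{3/2}X^{7/2}$ term. Where you genuinely differ is in how the small-argument regime of the Bessel functions is tamed, and in the treatment of $K_0$. The paper never splits the range of integration: it integrates by parts over the full support (so no boundary terms arise), treats $Y$ and $K$ uniformly, and instead performs a case analysis $|n|>q^2$ versus $|n|\le q^2$, using the piecewise bound (\ref{e49}); in the delicate case $|n|\le q^2$ it stops after \emph{one} integration by parts on the piece where $w'$ is supported in $[1/2,1]$ (its $\Delta_3$) and invokes $B_1(u)\ll u^{-1}$ there, precisely to dodge the $u^{-2}$ blow-up of $B_2$ that a second integration by parts would expose. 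You dodge the same singularity by a different device: splitting the integral at argument $u=1$, using the logarithmic bound for $Y_0$ inside (and, for $K_0$, positivity plus $\int_0^\infty uK_0(u)\,du\ll1$ with exponential decay when $|n|\gg q^2$ --- simpler than the paper's uniform treatment), and integrating by parts only outside, where the argument stays $\ge1$; the price is the interior boundary terms at the split point, which the paper's version never sees. Your bookkeeping for those terms does close: the $c^{-2}$ boundary term sums over $n\ll q^2$ to $\ll q\log^2(q+2)$, and the $c^{-4}|g'(x_0)|$ term is harmless because $x_0=q^2/(16\pi^2n)$ can lie in $[1/2,1]$ only when $n\asymp q^2$, and (since $q\le P\le 5X$) can never lie in $[3X^2,4X^2]$ --- a point worth stating explicitly. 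The one estimate you should spell out is the absorption of the cross term $\ll|\beta|q^{3/2}X^{3/2}$ arising from $\beta w'$: by the arithmetic--geometric mean inequality it is $\ll q^{3/2}X^{-1/2}+|\beta|^2q^{3/2}X^{7/2}\ll q+|\beta|^2q^{3/2}X^{7/2}$ using $q\le 5X$; the paper's own bounds for its $\Delta_1,\Delta_2$ contain the same term and leave the same absorption implicit, so this is a matter of completeness, not a gap.
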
\begin{proof}On applying (\ref{e22}) with
$g(x)=e(x\beta)w(x)$, we obtain
\begin{align*}h(\frac{a}{q}+\beta)=&q^{-1}\int (\log x+2\gamma-2\log
q)e(x\beta)w(x)dx \\ &+\sum_{|n|\not=0}e\Big(-\frac{\overline{a}\,
n}{q}\Big)\Delta(n,q,\beta),\end{align*} where
\begin{align*}\Delta(n,q,\beta)=\begin{cases}
-\frac{2\pi \tau(n)}{q}\int_0^\infty
e(x\beta)w(x)Y_0\big(\frac{4\pi\sqrt{xn}}{q}\big)dx,\ \ &\
\textrm{ if }\ n\ge 1,
\\ \frac{4 \tau(|n|)}{q}\int_0^\infty
e(x\beta)w(x)K_0\big(\frac{4\pi\sqrt{x|n|}}{q}\big)dx,\ \ &\
\textrm{ if }\ n\le -1.\end{cases}\end{align*} We use $B$ to
denote the Bessel function $Y$ or $K$. In view of (\ref{e26}), we
deduce from integration by parts that\begin{align*}
&\int_0^{\infty}e(x\beta)w(x)B_0\big(\frac{4\pi\sqrt{x|n|}}{q}\big)dx
\\=&\frac{q}{\epsilon
2\pi\sqrt{|n|}}\int_0^\infty\big(e(x\beta)w(x)\big)'\Big(x^{1/2}B_1\big(\frac{4\pi\sqrt{x|n|}}{q}\big)\Big)dx
\\=&\frac{q^2}{
(2\pi)^2|n|}\int_0^\infty\big(e(x\beta)w(x)\big)''\Big(x
B_2\big(\frac{4\pi\sqrt{x|n|}}{q}\big)\Big)dx.
\end{align*}
By the definition of $w(x)$, we have $w''(x)\ll
|x^{-2}\chi_{S}(x)|$ and $w'(x)\ll |x^{-1}\chi_S(x)|$, where
$\chi_{S}$ denotes the characteristic function over the set $S$
with $S=[1/2,\,1]\cup [3X^2,4X^2]$. Thus,
\begin{align*}\big(e(x\beta)w(x)\big)''\ll & |w''(x)|+|w'(x)\beta|+w(x)|\beta|^2
\\ \ll & x^{-2}\chi_{S}(x)+x^{-1}\chi_S(x)|\beta|+|\beta|^2\chi_{[1/2, 4X^2]}(x)
\\ \ll & x^{-2}\chi_{S}(x)+|\beta|^2\chi_{[1/2,4X^2]}(x).\end{align*}
From above, we deduce that
\begin{align*}&\int_0^\infty e(x\beta)w(x)B_0\big(\frac{4\pi\sqrt{x|n|}}{q}\big)dx
\\ \ll\ &
\frac{q^2}{|n|}\int_{1/2}^{4X^2}\big(x^{-1}\chi_{S}(x)+x|\beta|^2\big)
\Big|B_2\big(\frac{4\pi\sqrt{x|n|}}{q}\big)\Big|dx.\end{align*} By
(\ref{e24}) and (\ref{e25}),
\begin{align}\label{e49}B_2\big(\frac{4\pi\sqrt{x|n|}}{q}\big)=\begin{cases}O(q^{1/2}x^{-1/4}|n|^{-1/4}),\ \
&\textrm{ if }\ x\gg q^2/|n|\, ,\\ O(q^{2}x^{-1}|n|^{-1}),\ \
&\textrm{ if }\ x\ll q^2/|n|\, .
\end{cases}\end{align}
When $|n|>q^2$, we have\begin{align*}&\int_0^\infty
e(x\beta)w(x)B_0\big(\frac{4\pi\sqrt{x|n|}}{q}\big)dx
\\ \ll\ &
\frac{q^2}{|n|}\int_{1/2}^{4X^2}\big(x^{-1}\chi_{S}(x)+x|\beta|^2\big)
q^{1/2}x^{-1/4}|n|^{-1/4}dx \\ \ll \ &
q^{5/2}|n|^{-5/4}+|\beta|^2q^{5/2}X^{7/2}|n|^{-5/4}.\end{align*}
Then we obtain
\begin{align}\label{e410}\sum_{|n|>q^2}|\Delta(n,q,\beta)|\ll q\log^2(q+2)+|\beta|^2X^{7/2}q\log(q+2).
\end{align}
We also have
\begin{align*}
&\int_0^{\infty}e(x\beta)w(x)B_0\big(\frac{4\pi\sqrt{x|n|}}{q}\big)dx
\\=&\frac{q}{\epsilon
2\pi\sqrt{|n|}}\int_0^\infty\big(2\pi i\beta
e(x\beta)w(x)+e(x\beta)w'(x)\big)\Big(x^{1/2}B_1\big(\frac{4\pi\sqrt{x|n|}}{q}\big)\Big)dx
\\=&\Delta_1+\Delta_2+\Delta_3,
\end{align*}
where
\begin{align*}
\Delta_1=&\frac{q}{\epsilon 2\pi\sqrt{|n|}}\int_0^\infty\big(2\pi
i\beta
e(x\beta)w(x)\big)\Big(x^{1/2}B_1\big(\frac{4\pi\sqrt{x|n|}}{q}\big)\Big)dx,
\end{align*}
\begin{align*}
\Delta_2=&\frac{q}{\epsilon
2\pi\sqrt{|n|}}\int_{3X^2}^{4X^2}\big(e(x\beta)w'(x)\big)\Big(x^{1/2}B_1\big(\frac{4\pi\sqrt{x|n|}}{q}\big)\Big)dx,
\end{align*}and
\begin{align*}
\Delta_3=&\frac{q}{\epsilon
2\pi\sqrt{|n|}}\int_{1/2}^{1}\big(e(x\beta)w'(x)\big)\Big(x^{1/2}B_1\big(\frac{4\pi\sqrt{x|n|}}{q}\big)\Big)dx.
\end{align*}
We first handle $\Delta_1$. By (\ref{e26}),
\begin{align*}
\Delta_1=&\frac{q^2}{4\pi^2|n|}\int_0^\infty\big(2\pi i\beta
e(x\beta)w(x)\big)'\Big(xB_2\big(\frac{4\pi\sqrt{x|n|}}{q}\big)\Big)dx.
\end{align*}Then from (\ref{e49}), we get
\begin{align*}\Delta_1 \ll&\frac{q^2}{|n|}\int_{\frac{q^2}{|n|}}^{4X^2}(|\beta|+x|\beta|^2)
q^{1/2}x^{-1/4}|n|^{-1/4}dx
\\ &+\frac{q^2}{|n|}\int_{1/2}^{\frac{q^2}{|n|}} (|\beta|+x|\beta|^2)
q^{2}x^{-1}|n|^{-1}dx
\\ \ll &  |\beta|q^{5/2}X^{3/2}|n|^{-5/4}+|\beta|^2q^{5/2}X^{7/2}|n|^{-5/4}.\end{align*}
We apply (\ref{e26}) again to deduce that
\begin{align*}
\Delta_2=&\ \frac{q^2}{4\pi^2|n|}\int_{3X^2}^{4X^2}
\big(e(x\beta)w'(x)\big)'\Big(xB_2\big(\frac{4\pi\sqrt{x|n|}}{q}\big)\Big)dx
\\ \ll &\ \frac{q^2}{4\pi^2|n|}\int_{3X^2}^{4X^2}
\big(|\beta|+x^{-1}\big)q^{1/2}x^{-1/4}|n|^{-1/4}dx
\\ \ll &\ |\beta|q^{5/2}X^{3/2}|n|^{-5/4}+q^{5/2}X^{-1/2}|n|^{-5/4}.
\end{align*}
When $|n|\le q^2$, by (\ref{e24}) with $\nu=1$, we have
\begin{align*}\Delta_3\ll \frac{q}{\sqrt{|n|}}\int_{1/2}^1x^{-1/2}qx^{-1/2}|n|^{-1/2}dx\ll \frac{q^2}{|n|}.\end{align*}
It follows from above that if $|n|\le q^2$, then
\begin{align*}
\int_0^{\infty}e(x\beta)w(x)B_0\big(\frac{4\pi\sqrt{x|n|}}{q}\big)dx
\ll \frac{q^2}{|n|}+|\beta|^2q^{5/2}X^{7/2}|n|^{-5/4}.
\end{align*}Therefore,
\begin{align}\label{e411}\sum_{|n|\le q^2}|\Delta(n,q,\beta)|\ll q\log^2 (q+2)+|\beta|^2q^{3/2}X^{7/2}.\end{align}
Now (\ref{e48}) follows from (\ref{e410}) and (\ref{e411}).
\end{proof}

It follows from integration by parts together with trivial bounds
that
\begin{align}\label{e412}\int_0^X e(x^2\beta)dx\ll \frac{X}{\sqrt{1+X^2|\beta|}}\end{align}
and
\begin{align}\label{e413}\int (\log x-2\gamma-2\log q)e(x\beta)w(x)dx\ll
 \frac{X^2(\log q+\log X)}{1+X^2|\beta|}.\end{align}

\begin{lemma}\label{lemma43}
Suppose that $q\le P$ and $|\beta|\le \frac{1}{qP}$. For any
$v\in\Z$, we have
\begin{align*}\sum_{\substack{a=1\\ (a,q)=1}}^q
f(\frac{a}{q}+\beta)^3h(-\frac{a}{q}-\beta)e(-\frac{\overline{a}\,v}{q})
=&\frac{1}{q^4}\mathcal{I}(\beta,q)
\sum_{\substack{a=1\\
(a,q)=1}}^qS(q,a)^3e(-\frac{\overline{a}\,v}{q})
\\&+D(q,\beta),\end{align*} where
\begin{align*}\mathcal{I}(\beta,q)=\Big(\int_0^X e(x^2\beta)dx\Big)^3
\int(\log x+2\gamma-2\log q)e(-x\beta)w(x)dx\end{align*} and
$D(q,\beta)$ satisfies
\begin{align}\label{e414}\sum_{q\le P}\int_{|\beta|\le \frac{1}{qP}}
\Big|D(q,\beta)\Big|d\beta\ll X^{2}\log^6X.\end{align}
\end{lemma}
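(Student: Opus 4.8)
The plan is to substitute the approximations of Lemmas~\ref{lemma41} and~\ref{lemma42} into the inner $a$-sum and expand, isolating the main term from the cross terms. Write $f(\frac aq+\beta)=M_f+E_f$ with $M_f=\frac{S(q,a)}{q}\int_0^Xe(x^2\beta)\,dx$ and $E_f=\sum_{-3q/2<b\le 3q/2}S(q,a,b)E(b,q,\beta)$, and $h(-\frac aq-\beta)=M_h+E_h$, where (applying Lemma~\ref{lemma42} with $a$ replaced by $-a$ and $\beta$ by $-\beta$, using $\overline{-a}\equiv-\overline a\pmod q$) the quantity $M_h=q^{-1}\int(\log x+2\gamma-2\log q)e(-x\beta)w(x)\,dx$ is independent of $a$ and $E_h=\sum_{|n|\not=0}e(\frac{\overline a n}{q})\Delta(n,q,-\beta)$. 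Since $M_f^3M_h=q^{-4}S(q,a)^3\mathcal{I}(\beta,q)$, summing $M_f^3M_h\,e(-\overline a v/q)$ over $a$ reproduces precisely the displayed main term, and $D(q,\beta)$ is the sum over $a$ of $f^3h-M_f^3M_h$, that is, of all products containing at least one factor $E_f$ or $E_h$.

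The decisive structural point is that each such cross term expands into a product of three Gauss sums $S(q,a,b_1)S(q,a,b_2)S(q,a,b_3)$---each $b_j$ being either $0$ (from a factor $M_f$) or a summation variable (from a factor $E_f$)---multiplied by $e(\overline a(n-v)/q)$ or $e(-\overline a v/q)$, so that the sum over $a$ collapses into a complete sum $T(q;b_1,b_2,b_3,m)$. By Lemma~\ref{lemma38} this is $\le 4q_1^2q_2^{5/2}$ uniformly in the $b_j$ and $m$; pulling this bound out, one is left with innocuous factors: each $E_f$ contributes $\sum_b|E(b,q,\beta)|\ll\log(q+2)$ by (\ref{e42}), the factor $E_h$ contributes $\sum_{|n|\not=0}|\Delta(n,q,-\beta)|\ll q\log^2(q+2)+|\beta|^2q^{3/2}X^{7/2}$ by (\ref{e48}), and each surviving $M_f$ or $M_h$ contributes an oscillatory integral controlled by (\ref{e412}) and (\ref{e413}). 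Every cross term is thereby reduced to an explicit expression in $q_1,q_2,\beta$ and $X$.

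It then remains to carry out a finite bookkeeping over the (at most eight) types of cross term. For each I would integrate over $|\beta|\le\frac1{qP}$ using the elementary estimates $\int(1+X^2|\beta|)^{-s}\,d\beta\ll X^{-2}$ for $s>1$ and the truncated moments $\int_{|\beta|\le 1/(qP)}|\beta|^{j}(1+X^2|\beta|)^{-s}\,d\beta$, and then sum over $q=q_1q_2\le P$ by writing $\sum_{q\le P}q_1^{\sigma}q_2^{\rho}=\sum_{q_2\text{ squarefull}}q_2^{\rho}\sum_{q_1\le P/q_2}q_1^{\sigma}$. The crucial arithmetic input here is the sparsity of squarefull numbers, which gives $\sum_{q_2\text{ squarefull}}q_2^{-1/2}\ll\log P$ and keeps each resulting $q$-sum of size $P^{\sigma+1}$ times a bounded power of $\log P$; with $P\asymp X$ one finds that every type contributes $\ll X^2\log^cX$ with $c\le 6$.

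The main obstacle is to confirm that the genuinely off-diagonal contributions do not exceed the target, above all the term $E_f^3M_h$ and the part of $E_f^3E_h$ arising from the $q\log^2(q+2)$ piece of (\ref{e48}). In these no factor $M_f$ survives to supply the decaying integral $\int_0^Xe(x^2\beta)\,dx$, so one gains only from the length $\frac1{qP}$ of the $\beta$-interval and from the bound $4q_1^2q_2^{5/2}$; a direct computation then yields a contribution of size exactly $X^2\log^6X$, matching the claimed error term and explaining why no smaller power of the logarithm is attainable by this route. Once these extremal terms are shown admissible, all remaining cross terms retain at least one factor $M_f$, or else pair the $|\beta|^2q^{3/2}X^{7/2}$ piece of $\Delta$ against the decay of $\int_0^Xe(x^2\beta)\,dx$, and are correspondingly smaller; summing every contribution establishes (\ref{e414}).
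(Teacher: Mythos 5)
Your proposal is correct and follows essentially the same route as the paper's own proof: the paper likewise splits $f$ and $h$ via Lemmas \ref{lemma41} and \ref{lemma42}, collects the seven cross terms (there called $D_1,\dots,D_7$), collapses each sum over $a$ into $T(q;b_1,b_2,b_3,m)$ bounded through Lemma \ref{lemma38}, and then integrates in $\beta$ and sums over $q$ using the sparsity of square-full numbers exactly as you describe. In particular, your identification of the $E_f^3E_h$ term, with the $q\log^2(q+2)$ part of (\ref{e48}), as the source of the full $X^2\log^6X$ contribution matches the paper, where that term is $D_1$ and is the only one estimated as large as $X^2\log^6X$.
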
\begin{proof}Let\begin{align*}\upsilon(\beta)=\int_0^Xe(x^2\beta)dx\ \textrm{ and }\
\vartheta(\beta,q)=\int(\log x+2\gamma-2\log
q)e(x\beta)w(x)dx.\end{align*} Set $B=3q/2$. Then we introduce
\begin{align*}D_1=\sum_{\substack{-B<b_1\le B\\ -B<b_2\le B \\ -B<b_3\le B}}\sum_{|n|\not=0}
\Big(\prod_{j=1}^3E(b_j,q,\beta)\Big)\Delta(n,q,-\beta)T(q;b_1,b_2,b_3,n-v),\end{align*}
\begin{align*}D_2=\frac{\upsilon(\beta)}{q}\sum_{\substack{-B<b_1\le B\\ -B<b_2\le B }}\sum_{|n|\not=0}
E(b_1,q,\beta)E(b_2
,q,\beta)\Delta(n,q,-\beta)T(q;b_1,b_2,0,n-v),\end{align*}
\begin{align*}D_3=\frac{\upsilon^2(\beta)}{q^2}\sum_{\substack{-B<b_1\le B}}\sum_{|n|\not=0}
E(b_1,q,\beta)\Delta(n,q,-\beta)T(q;b_1,0,0,n-v),\end{align*}
\begin{align*}D_4=\frac{\upsilon^3(\beta)}{q^3}\sum_{|n|\not=0}
\Delta(n,q,-\beta)T(q;0,0,0,n-v),\end{align*}
\begin{align*}D_5=\frac{\vartheta(-\beta,q)}{q}
\sum_{\substack{-B<b_1\le B\\ -B<b_2\le B \\ -B<b_3\le B}}
\Big(\prod_{j=1}^3E(b_j,q,\beta)\Big)T(q;b_1,b_2,b_3,-v),\end{align*}
\begin{align*}D_6=\frac{\upsilon(\beta)\vartheta(-\beta,q)}{q^2}
\sum_{\substack{-B<b_1\le B\\ -B<b_2\le B }} E(b_1,q,\beta)E(b_2
,q,\beta)T(q;b_1,b_2,0,-v),\end{align*} and
\begin{align*}D_7=\frac{\upsilon^2(\beta)\vartheta(-\beta,q)}{q^3}\sum_{\substack{-B<b_1\le B}}
E(b_1,q,\beta)T(q;b_1,0,0,-v).\end{align*} By Lemmas
\ref{lemma41}-\ref{lemma42}, we get
\begin{align*}&\ \sum_{\substack{a=1\\ (a,q)=1}}^q
f(\frac{a}{q}+\beta)^3h(-\frac{a}{q}-\beta)e(-\frac{\overline{a}\,v}{q})
\\=&\ \frac{1}{q^4}\mathcal{I}(\beta,q)
\sum_{\substack{a=1\\
(a,q)=1}}^qS(q,a)^3e(-\frac{\overline{a}\,v}{q})
\\&\ \ \ \ \ +D_1+3D_2+3D_3+D_4+D_5+3D_6+3D_7.\end{align*}
We only handle $D_1,D_4,D_5$ and $D_7$. The estimates for
$D_2,D_3$ and $D_6$ can be handled similarly. By Lemma
\ref{lemma38}, $T(q;b_1,b_2,b_3,n-v)\ll q_1^{2}q_2^{5/2}$. Then by
(\ref{e42}) and (\ref{e48}),\begin{align*}D_1\ll q
_1^2q_2^{5/2}(\log^3X)(q_1q_2\log^2X+|\beta|^2q_1^{3/2}q_2^{3/2}X^{7/2}).\end{align*}
Therefore, we can obtain
\begin{align*}\sum_{q\le P}\int_{|\beta|\le \frac{1}{qP}}|D_1|d\beta\ll X^2\log^6X.\end{align*}
For $D_4$, one has by (\ref{e42}), (\ref{e48}) and (\ref{e412})
that\begin{align*}D_4\ll q
_1^2q_2^{5/2}\frac{X^3}{q_1^{3}q_2^{3}(1+X^2|\beta|)^{3/2}}
(q_1q_2\log^2X+|\beta|^2q_1^{3/2}q_2^{3/2}X^{7/2}).\end{align*}
Then one has
\begin{align*}\sum_{q\le P}\int_{|\beta|\le \frac{1}{qP}}|D_4|d\beta\ll X^2\log^3X.\end{align*}
By (\ref{e42}) and (\ref{e413}), \begin{align*}D_5\ll q
_1^2q_2^{5/2}(\log^4X)\frac{X^2}{q_1q_2(1+X^2|\beta|)}.\end{align*}
It follows that
\begin{align*}\sum_{q\le P}\int_{|\beta|\le \frac{1}{qP}}|D_5|d\beta\ll X^2\log^5X.\end{align*}
By (\ref{e42}), (\ref{e412}) and (\ref{e413}),\begin{align*}D_7\ll
q
_1^2q_2^{5/2}(\log^2X)\frac{X^4}{q_1^{3}q_2^{3}(1+X^2|\beta|)^{2}}.\end{align*}
Thus we have
\begin{align*}\sum_{q\le P}\int_{|\beta|\le \frac{1}{qP}}|D_7|d\beta\ll X^2\log^4X.\end{align*}
Similarly, we can prove
\begin{align*}\sum_{q\le P}\int_{|\beta|\le \frac{1}{qP}}(|D_2|+|D_3|+|D_6|)d\beta\ll X^2\log^5X.\end{align*}
Therefore, (\ref{e414}) is established.
\end{proof}
 \vskip3mm

\section{The Proof of Theorem \ref{theorem}}

The apply the Hardy-Littlewood-Kloosterman circle method to
decompose the integral (\ref{e21}), getting
\begin{align*}S_3(X)=\sum_{q\le P}\sum_{\substack{a=1\\ (a,q)=1}}^q
\int_{\mathcal{B}(q,a)}f(\frac{a}{q}+\beta)^3h(-\frac{a}{q}-\beta)d\beta,\end{align*}
where
\begin{align*}\mathcal{B}(q,a)=\Big[-\frac{1}{q(q+q')},\frac{1}{q(q+q'')}\Big]\end{align*}
with $q'$ and $q''$ satisfying
$$P<q+q',q+q''\le q+P,\ aq'\equiv1\pmod{q},\ aq''\equiv-1\pmod{q}.$$
Note that
\begin{align*}\Big[-\frac{1}{2qP},\frac{1}{2qP}\Big]\subseteq\mathcal{B}(q,a)
\subseteq\Big[-\frac{1}{qP},\frac{1}{qP}\Big].\end{align*} We
exchange the summation over $a$ and the integration over $\beta$
by the standard technique. One may refer to the proof of Lemma 13
of Estermann \cite{Estermann} for this technique (see also Section
3 of Heath-Brown \cite{HB}). We have
\begin{align}\label{e51}&\sum_{\substack{a=1\\ (a,q)=1}}^q
\int_{\beta\in\mathcal{B}(q,a)}\Big(\cdots\Big)d\beta\\
=&\int_{|\beta|\le
\frac{1}{qP}}\sum_{|v|\le P}\sigma(v;\beta,q)\sum_{\substack{a=1\\
(a,q)=1}}^q\Big(\cdots\Big)e\Big(-\frac{\overline{a}\,v}{q}\Big)d\beta\notag\end{align}
for some function $\sigma$ satisfying
\begin{align}\label{e52}\sigma(v;\beta,q)\ll 1/(1+|v|).\end{align}
Therefore,
\begin{align*}S_3(X)=\sum_{q\le P}\int_{|\beta|\le
\frac{1}{qP}}\sum_{|v|\le P}\sigma(v;\beta,q)\sum_{\substack{a=1\\
(a,q)=1}}^qf(\frac{a}{q}+\beta)^3h(-\frac{a}{q}-\beta)e\Big(-\frac{\overline{a}\,
v}{q}\Big)d\beta.\end{align*} In light of Lemma \ref{lemma43},
\begin{align*}S_3(X)=\sum_{q\le P}&\int_{|\beta|\le
\frac{1}{qP}}\sum_{|v|\le P}\sigma(v;\beta,q)\sum_{\substack{a=1\\
(a,q)=1}}^q\frac{S(q,a)^3}{q^4}\mathcal{I}(\beta,q)e\Big(-\frac{\overline{a}\,
v}{q}\Big)d\beta \\ +& \sum_{q\le P}\int_{|\beta|\le
\frac{1}{qP}}\sum_{|v|\le P}\sigma(v;\beta,q)D(q,\beta)d\beta
,\end{align*} where
\begin{align*}\mathcal{I}(\beta,q)=\Big(\int_0^X e(x^2\beta)dx\Big)^3
\int(\log x+2\gamma-2\log q)e(-x\beta)w(x)dx\end{align*} and
$D(q,\beta)$ satisfies
\begin{align*}\sum_{q\le P}\int_{|\beta|\le
\frac{1}{qP}}\Big|D(q,\beta)\Big|d\beta \ll
X^2\log^6X.\end{align*}Then we conclude by (\ref{e52}) that
\begin{align}\label{e53}S_3(X)=\sum_{q\le P}\sum_{\substack{a=1\\ (a,q)=1}}^q
\int_{\beta\in\mathcal{B}(q,a)}\frac{S(q,a)^3}{q^4}\mathcal{I}(\beta,q)d\beta
+ O(X^2\log^7X).\end{align} By (\ref{e51}), we have
\begin{align*}&\sum_{q\le P}\sum_{\substack{a=1\\ (a,q)=1}}^q
\int_{\beta\in\mathcal{B}(q,a)\setminus[-\frac{1}{2qP},\frac{1}{2qP}]}\frac{S(q,a)^3}{q^4}\mathcal{I}(\beta,q)d\beta
\\ =& \sum_{q\le P}
\int_{\frac{1}{2qP}\le |\beta|\le
\frac{1}{qP}}\sum_{|v|\le P}\sigma(v;\beta,q)\sum_{\substack{a=1\\
(a,q)=1}}^q\frac{S(q,a)^3}{q^4}\mathcal{I}(\beta,q)e\Big(-\frac{\overline{a}\,
v}{q}\Big)d\beta \\ =& \sum_{q\le P}\int_{\frac{1}{2qP}\le
|\beta|\le \frac{1}{qP}}\sum_{|v|\le
P}\sigma(v;\beta,q)\frac{T(q;0,0,0,-v)}{q^4}\mathcal{I}(\beta,q)d\beta.\end{align*}
Then we deduce from Lemma \ref{lemma38} and (\ref{e52}) that
\begin{align*}&\sum_{q\le P}\sum_{\substack{a=1\\ (a,q)=1}}^q
\int_{\beta\in\mathcal{B}(q,a)\setminus[-\frac{1}{2qP},\frac{1}{2qP}]}\frac{S(q,a)^3}{q^4}\mathcal{I}(\beta,q)d\beta
\\ \ll & \sum_{|v|\le P}\frac{1}{1+|v|}\sum_{q\le P}\int_{\frac{1}{2qP}\le |\beta|\le
\frac{1}{qP}}
\frac{q_1^2q_2^{5/2}}{q^4}\big|\mathcal{I}(\beta,q)\big|d\beta.\end{align*}
On applying $\mathcal{I}(\beta,q)\ll X^5(\log
X)(1+X^2|\beta|)^{-5/2}$, we can obtain
\begin{align}\label{e54}\sum_{q\le P}\sum_{\substack{a=1\\ (a,q)=1}}^q
\int_{\beta\in\mathcal{B}(q,a)\setminus[-\frac{1}{2qP},\frac{1}{2qP}]}\frac{S(q,a)^3}{q^4}\mathcal{I}(\beta,q)d\beta
\ll X^2\log^3X.\end{align} Moreover, we have
\begin{align}\label{e55}\sum_{q\le P}
\frac{T(q)}{q^4}\int_{|\beta|>
\frac{1}{2qP}}\mathcal{I}(\beta,q)d\beta \ll &\sum_{q\le P}
\frac{|T(q)|}{q^4}X^{3/2}(\log X)q^{3/2} \\ \ll &
X^2\log^2X.\notag\end{align} From (\ref{e53}), (\ref{e54}) and
(\ref{e55}), we get\begin{align}\label{e56}S_3(X)=&\sum_{q\le
P}\sum_{\substack{a=1\\ (a,q)=1}}^q \int_{|\beta|\le
\frac{1}{2qP}}\frac{S(q,a)^3}{q^4}\mathcal{I}(\beta,q)d\beta +
O(X^2\log^7X) \\ =& \sum_{q\le P} \frac{T(q)}{q^4}\int_{|\beta|\le
\frac{1}{2qP}}\mathcal{I}(\beta,q)d\beta+ O(X^2\log^7X)\notag
\\ =& \sum_{q\le P}
\frac{T(q)}{q^4}\int_{-\infty}^{\infty}\mathcal{I}(\beta,q)d\beta+
O(X^2\log^7X)\notag.\end{align} Let
\begin{align*}\mathcal{I}_0(\beta,q)=\Big(\int_0^X e(x^2\beta)dx\Big)^3
\int(\log x+2\gamma-2\log q)e(-x\beta)w_0(x/X^2)dx.\end{align*} By
changing variables, we conclude that
\begin{align}\label{e57}\mathcal{I}_0(\beta,q)=&
X^5(\log X)\mathcal{J}_1(X^2\beta)
\\ &+X^5\Big(\mathcal{J}_2(X^2\beta) + (2\gamma-2\log
q)\mathcal{J}_1(X^2\beta)\Big),\notag\end{align} where
\begin{align*}\mathcal{J}_1(\beta)=\Big(\int_0^1 e(x^2\beta)dx\Big)^3
\int e(-x\beta)w_0(x)dx.\end{align*}
and\begin{align*}\mathcal{J}_2(\beta)=\Big(\int_0^1
e(x^2\beta)dx\Big)^3 \int (\log x)e(-x\beta)w_0(x)dx.\end{align*}
Note that $|\mathcal{I}(\beta,q)-\mathcal{I}_0(\beta,q)|\ll
X^3(\log X)(1+X^2|\beta|)^{-3/2}$. Then we deduce that
\begin{align*}\sum_{q\le P}
\frac{T(q)}{q^4}\int_{-\infty}^{\infty}\Big(\mathcal{I}(\beta,q)-\mathcal{I}_0(\beta,q)\Big)d\beta
\ll & \sum_{q\le P}
\frac{|T(q)|}{q^4}\int_{-\infty}^{\infty}\frac{X^3\log
X}{(1+X^2|\beta|)^{3/2}}d\beta.\end{align*} Therefore, by Lemma
\ref{lemma38}
\begin{align}\label{e58}\sum_{q\le P}
\frac{T(q)}{q^4}\int_{-\infty}^{\infty}\Big(\mathcal{I}(\beta,q)-\mathcal{I}_0(\beta,q)\Big)d\beta
\ll  X\log X.\end{align} By (\ref{e56}), (\ref{e57}) and
(\ref{e58}),
\begin{align*}S_3(X)=&\sum_{q\le P}
\frac{T(q)}{q^4}\int_{-\infty}^{\infty}\mathcal{I}_0(\beta,q)d\beta+
O(X^2\log^7X) \\=& \sum_{q\le P} \frac{T(q)}{q^4}X^3(\log
X)\mathfrak{J}_1+ \sum_{q\le P} \frac{T(q)}{q^4}X^3\mathfrak{J}_2
\\ &\ \ \ \ \ \ \ \ \ \ +
\sum_{q\le P} \frac{T(q)(2\gamma-2\log q)}{q^4}X^3\mathfrak{J}_1
 +O(X^2\log^7X),\end{align*}where
\begin{align*}\mathfrak{J}_1=\int_{-\infty}^{\infty}\mathcal{J}_1(\beta)d\beta\ \textrm{ and }
\
\mathfrak{J}_2=\int_{-\infty}^{\infty}\mathcal{J}_2(\beta)d\beta.\end{align*}
It is easy to verify
\begin{align*}\sum_{q\le P}
\frac{T(q)}{q^4}= \mathfrak{S}_1 +O(X^{-1}\log X),\end{align*} and
\begin{align*}\sum_{q\le P}
\frac{T(q)(2\gamma-2\log q)}{q^4}= \mathfrak{S}_2 +O(X^{-1}\log^2
X),\end{align*}where $\mathfrak{S}_1$ and $\mathfrak{S}_2$ are
given by (\ref{e13}) and (\ref{e14}) respectively. Then we finally
obtain
\begin{align*}S_3(X)=\mathfrak{S}_1\mathfrak{J}_1X^3\log X+
(\mathfrak{S}_1\mathfrak{J}_2+\mathfrak{S}_2\mathfrak{J}_1)X^3+O(X^2\log^7X).\end{align*}
Note that $\mathfrak{S}_1, \mathfrak{S}_2, \mathfrak{J}_1,
\mathfrak{J}_2$ are constants independent of $X$. The proof of
Theorem \ref{theorem} is completed.

 \vskip3mm

\subsection*{Acknowledgements} The author would like to thank the
referee for many helpful comments and suggestions.

\vskip3mm

\end{document}